\newtheorem{theoreme}[equation]{Th\'eor\`eme}
\newtheorem{lemme}[equation]{Lemme}
\newtheorem{lemme-definition}[equation]{Lemme-D\'efinition}
\newtheorem{proposition-definition}[equation]{Proposition-D\'efinition}
\newtheorem{proposition}[equation]{Proposition}
\newtheorem{definition}[equation]{D\'efinition}
\newtheorem{corollaire}[equation]{Corollaire}
\theoremstyle{remark}
\newtheorem{remarque}[equation]{Remarque}
\newtheorem{notation}[equation]{Notation}
\numberwithin{equation}{section}
\DeclareMathOperator{\Id}{Id}
\newcommand{\bbN}{{\mathbb N}}
\newcommand{\bbZ}{{\mathbb Z}}
\newcommand{\bI}{{\bf I}}
\newcommand{\bJ}{{\bf J}}
\newcommand{\bS}{{\bf S}}
\newcommand{\bb}{{\bf b}}
\newcommand{\bp}{{\bf p}}
\newcommand{\br}{{\bf r}}
\newcommand{\bs}{{\bf s}}
\newcommand{\bt}{{\bf t}}
\newcommand{\bu}{{\bf u}}
\newcommand{\bv}{{\bf v}}
\newcommand{\bw}{{\bf w}}
\newcommand{\bx}{{\bf x}}
\newcommand{\by}{{\bf y}}
\newcommand{\bz}{{\bf z}}
\newcommand{\BW}{{B^+}}
\newcommand{\inv}{^{-1}}
\newcommand{\cf}{{\it cf.}}
\newcommand{\ie}{{\it i.e.}}
\newcommand{\lexp}[2]{\kern\scriptspace\vphantom{#2}^{#1}\kern-\scriptspace#2}
\def\BW{{B^+_W}}
\def\Bred{{B_W^{\text{red}}}}
\def\wmst{{\bw_{\{\bs,\bt\}}}}
\def\wst#1{{<\bs,\bt;#1>}}
\def\wts#1{{<\bt,\bs;#1>}}
\def\barlength{10 pt}
\def\node{{\kern -1pt\circ\kern -1pt}}
\def\bulnode{{\kern -1pt\bullet\kern -1pt}}
\def\dbulnode#1{{\kern-1pt\displaystyle\mathop\bullet^{\hbox to 0pt{\hss
$\scriptstyle#1$\hss}}\kern-1pt}}
\def\dbar{{\rlap{\vrule width\barlength height2pt depth-1.5pt} 
                 \vrule width\barlength height3pt depth-2.5pt}
		 }
\def\bar{{\vrule width\barlength height2.75pt depth-2.25pt}}
\def\vertbar#1#2{\rlap{\kern 0.7pt\vrule width0.5pt height1pt depth9pt}
		 \rlap{\lower12.7pt\rlap{$#2$}}#1}
\def\cf{{\it cf. }}
\def\ie{{\it i.e.}}
\begin{document}
\title[Groupes de tresses purs]{Pr\'esentation des groupes de tresses purs
et de certaines de leurs extensions}
\author{Fran\c cois Digne}
\address{LAMFA, Universit\'e de Picardie,
33 Rue Saint-Leu, 80039 Amiens France}

\maketitle
\section{Introduction et notations}
Dans cet article on \'etudie par des m\'ethodes combinatoires
la pr\'esentation du groupe de tresses purs associ\'e \`a
un groupe de Coxeter quelconque.
On g\'en\'eralise \`a tous les types le d\'evissage 
du groupe de tresses pur en produits semi-directs successifs, 
(connu dans le cas du type $A_n$, \cf  par exemple
[\cite{Birman}, 1.8.2] ou [\cite{Hansen}, appendice
par G\ae dde]). On montre en particulier que
pour le type $B_n$ on a (comme pour $A_n$)
une suite d'extensions de groupes libres.

Dans ce qui suit $(W,S)$ est un syst\`eme de Coxeter et $\BW$ (resp. $B_W$)
est le mono{\"\i}de de tresses (resp. le groupe de tresses) correspondant.
Rappelons les d\'efinitions.
Pour $s$ et $t$ dans $S$,
notons $m_{s,t}\in N\cup\{\infty\}$ l'ordre de $st$ dans $W$ et soit
$\bS$ un ensemble en bijection avec $S$, la bijection \'etant not\'ee
$\bs\mapsto s$. Avec ces notations
$\BW$ (resp. $B_W$) est le mono\"\i de
(resp. le groupe) engendr\'e par $\bS$ avec comme seules relations
$\underbrace{\bs\bt\ldots}_{m_{s,t}}=
\underbrace{\bt\bs\ldots}_{m_{s,t}}$, pour tout $\bs\in \bS$ et tout 
$\bt\in\bS$.

On a donc un morphisme canonique $p$ de $B_W$ dans $W$ qui envoie
$\bs$ sur $s$. On note $P_W$ le
noyau de $p$ (groupe de tresses pur).
On a aussi un homomorphisme canonique $j :\BW\to B_W$ (dont on ne sait
pas en g\'en\'eral s'il est injectif).
La surjection $p\circ j$ a une section canonique obtenue
en relevant les \'ecritures r\'eduites des \'el\'ements de $W$ (c'est
ind\'ependant de l'\'ecriture r\'eduite).
On d\'esigne par
$\bw$ le relev\'e dans $\BW$ de $w\in W$ par cette section,
ainsi que son image dans $B_W$.
On note $\Bred$ l'ensemble des relev\'es par la section des
\'el\'ements de $W$, ou l'ensemble de leurs images dans
$B_W$ (ces deux ensembles
sont mis en bijection par $j$).
L'ensemble $\Bred$ est constitu\'e
des \'el\'ements $\bs_1\bs_2\ldots\bs_k$ dont
l'image dans $W$ est de longueur $k$.

Si $\bb$ est un \'el\'ement de $B_W$ de la forme
$\bb=\bs_1^{\varepsilon_1}\bs_2^{\varepsilon_2}\ldots\bs_k^{\varepsilon_k}$,
avec $\varepsilon_i=\pm1$,
on pose $\tilde\bb=\bs_k^{\varepsilon_k}\ldots\bs_2^{\varepsilon_2}
\bs_1^{\varepsilon_1}$. Cet \'el\'ement est bien d\'efini, ind\'ependamment de la
d\'ecomposition de $\bb$ en produit de g\'en\'erateurs car les relations de
d\'efinition de $B_W$ sont invariantes par retournement des mots.
Si $I$ est une partie de $S$ on note
$W_I$ le sous-groupe (parabolique) de
$W$ engendr\'e par $I$.
Si $I$ est un partie sph\'erique de $S$ (c'est-\`a-dire si $W_I$ est
fini), on note $w_I$ l'\'el\'ement
de plus grande
longueur du sous-groupe parabolique $W_I$ engendr\'e par $I$;
on notera $\bI$ la partie de $\bS$ correspondant \`a $I$ et $\bw_\bI$ le relev\'e
dans $\Bred$ de $w_I$.
Nous serons amen\'es \`a utiliser souvent le lemme d'\'echange, ou plut\^ot une
version adapt\'ee au mono\"\i de de tresses qui est
\begin{lemme}\label{echange} Soient $\bs$ et $\bt$ dans $\bS$ et
soit $\bb\in\Bred$;
si $\bs\bb\in\Bred$ et $\bb\bt\in\Bred$ mais
$\bs\bb\bt\not\in\Bred$ alors $\bs\bb=\bb\bt$.
\end{lemme}
\section{La fonction $N$}
Soit $T$ l'ensemble des r\'eflexions de $W$, c'est-\`a-dire des conjugu\'es des
\'el\'ements de $S$.
\begin{proposition} \label{N(b)}
\begin{enumerate}
\item Il existe une application $N:B_W\to\bbZ T$ telle que
$$N(\bs_1^{\varepsilon_1}\bs_2^{\varepsilon_2}\ldots\bs_k^{\varepsilon_k})=
\sum_{i=1}^{i=k}\varepsilon_i s_1s_2\ldots s_{i-1}s_is_{i-1}\ldots
s_1,$$
o\`u $\bs_i\in\bS$ et $\varepsilon_i=\pm1$.
\item L'application $\bb\mapsto(N(\bb),p(\bb))$ est un homomorphisme de
groupes de $B_W$ dans $\bbZ T\rtimes W$, o\`u l'action de $W$ sur $\bbZ T$ \'etend
lin\'eairement l'action de $W$ sur $T$ par conjugaison.
\end{enumerate}
\end{proposition}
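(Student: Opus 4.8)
The plan is to realize the pair $(N,p)$ as a single homomorphism coming from the universal property of the braid group, rather than checking invariance of the defining formula relation by relation. Write $G=\bbZ T\rtimes W$, with multiplication $(m,v)(n,w)=(m+v\cdot n,\,vw)$, where $v\cdot n$ denotes the linearly extended conjugation action of $v\in W$ on $n\in\bbZ T$. I would define a map on generators by $\bs\mapsto(s,s)\in G$, the first component being the reflection $s\in T\subset\bbZ T$. Since $B_W$ is generated by $\bS$ subject only to the braid relations, it suffices to check that the elements $(s,s)$ satisfy those relations in $G$; this produces a homomorphism $\Phi\colon B_W\to G$. Part (ii) is then the assertion $\Phi=(N,p)$, and part (i) is the explicit formula for the first component of $\Phi$.

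Before checking the relations I would record the shape of $\Phi$ on an arbitrary word. In $G$ one has $(s,s)\inv=(-s,s)$, using that $s$ is an involution in $W$ and that $s\cdot s=sss\inv=s$; hence $\Phi(\bs^\varepsilon)=(\varepsilon s,s)$ for $\varepsilon=\pm1$. A straightforward induction on $k$, using only the multiplication rule of $G$, then gives
\[
\Phi(\bs_1^{\varepsilon_1}\cdots\bs_k^{\varepsilon_k})=\Bigl(\sum_{i=1}^{k}\varepsilon_i\,s_1\cdots s_{i-1}s_is_{i-1}\cdots s_1,\ s_1\cdots s_k\Bigr),
\]
the inductive step producing the new summand as $(s_1\cdots s_{i-1})\cdot(\varepsilon_i s_i)=\varepsilon_i s_1\cdots s_{i-1}s_is_{i-1}\cdots s_1$. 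Each summand is a conjugate of $s_i$, hence lies in $T$, so the first component lies in $\bbZ T$; the second component is exactly $p(\bs_1^{\varepsilon_1}\cdots\bs_k^{\varepsilon_k})$. Thus once $\Phi$ is known to be well defined, its first component automatically satisfies the formula of (i) independently of the chosen word, and $(N,p)=\Phi$ is a homomorphism, giving (ii).

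The only real point is therefore the braid relation in $G$. Fix $\bs,\bt$ with $m=m_{s,t}<\infty$ and compare $\Phi(\underbrace{\bs\bt\cdots}_{m})$ with $\Phi(\underbrace{\bt\bs\cdots}_{m})$ using the displayed formula. The second components agree, both being the longest element $w_{\{s,t\}}$ of the dihedral group $W_{\{s,t\}}$. For the first components I would invoke the classical description of the reflections attached to a reduced word: for a reduced expression $w=s_{i_1}\cdots s_{i_\ell}$ the elements $t_j=s_{i_1}\cdots s_{i_{j-1}}s_{i_j}s_{i_{j-1}}\cdots s_{i_1}$ are pairwise distinct and run exactly over the inversion set $\{t\in T:\ell(tw)<\ell(w)\}$, which depends only on $w$. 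Now $\underbrace{sts\cdots}_m$ and $\underbrace{tst\cdots}_m$ are reduced expressions of the same $w_{\{s,t\}}$, so the two reflection sequences enumerate the same $m$-element subset of $T$, all coefficients being $+1$; hence the two first components coincide. This establishes the braid relation in $G$ and completes the argument.

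I expect this last step, the independence of the associated reflection set from the chosen reduced word, to be the crux; in the present situation it is only needed for the longest element of a rank-two parabolic, where it can alternatively be verified by a direct dihedral computation.
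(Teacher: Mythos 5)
Your proposal is correct and follows essentially the same route as the paper: both define the map on generators by $\bs\mapsto(s,s)\in\bbZ T\rtimes W$, reduce everything to checking the braid relations, and verify them by observing that both sides of a rank-two braid relation produce the full set of reflections of the dihedral group $W_{\{s,t\}}$ (paired with its longest element). The only cosmetic difference is that you justify this last point via the general inversion-set description of reduced words, whereas the paper asserts the dihedral computation directly -- a distinction without substance, as you yourself note at the end.
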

\begin{proof}
Nous allons montrer qu'il existe un homomorphisme de groupes $f:B_W\to
\bbZ T\rtimes W$ qui envoie $\bs\in\bS$ sur $(s,s)$. 
La composition de $f$ avec la projection sur la premi\`ere composante
donne alors une application $N$ qui v\'erifie (i).
Pour montrer l'existence de $f$ il suffit de v\'erifier les relations de tresses
c'est-\`a-dire de montrer que pour tout couple $(\bs,\bt)$ d'\'el\'ements de $\bS$
on a la relation
$f(\bs)f(\bt)\ldots=f(\bt)f(\bs)\ldots$, o\`u il y a $m_{s,t}$ facteurs
dans les deux membres. Or dans le produit semi-direct $\bbZ T\rtimes W$
la premi\`ere composante de
$(s,s)(t,t)\ldots$ (avec $m_{s,t}$ facteurs) est la somme de toutes les
r\'eflexions du groupe di\'edral engendr\'e par $s$ et $t$ et la deuxi\`eme
composante est l'\'el\'ement de plus grande longueur de ce groupe
di\'edral. On trouve donc le m\^eme r\'esultat quand on intervertit $\bs$ et $\bt$,
d'o\`u la relation et la proposition.
\end{proof}
\begin{lemme}\label{restriction a P_W}
Le restriction de $N$ \`a $P_W$ est un homomorphisme compatible avec les
actions de
$B_W$ respectivement par conjugaison sur $P_W$ et par conjugaison \`a travers $p$
sur $T$.
\end{lemme}
\begin{proof} La restriction de $(N,p)$ \`a $P_W$ est simplement la
restriction de $N$, d'o\`u les deux assertions.
\end{proof}
\begin{lemme}\label{image de P_W}
L'image de $P_W$ par $N$ est $2\bbZ T$.
\end{lemme}
\begin{proof}
Le groupe $P_W$ est engendr\'e par les conjugu\'es
des carr\'es des r\'eflexions par d\'efinition, donc son image par $(N,p)$
est incluse dans le
plus petit sous-groupe normal contenant les images des carr\'es des
r\'eflexions qui sont les
$(2s,1)$ et qui sont dans le sous-groupe normal $2T\times\{1\}$ de
$\bbZ T\rtimes W$. Dans l'autre sens,
pour tout $\bs\in\bS$ et tout $n\in\bbZ$
l'image de $\bs^{2n}\in P_W$ est $2ns$.
Comme l'image $N(P_W)$ est stable par action de $W$, elle contient donc
$2\bbZ T$, d'o\`u l'\'egalit\'e cherch\'ee.
\end{proof}
Par passage au quotient par $P_W$ on obtient un homomorphisme
$(\overline N,\Id):W\to \bbZ/2\bbZ T\rtimes W$.
Il est connu que l'application $\overline N$ est
injective (\cf [\cite{Dyer}, chapitre 1], par exemple).
On sait aussi que l'image de $\overline N$, consid\'er\'ee comme un ensemble de
parties de $T$ est caract\'eris\'ee par le fait d'\^etre admissible au sens
suivant (\cf [\cite{Papi}],
o\`u cela est fait dans un cadre un peu plus
restrictif mais facile \`a g\'en\'eraliser):
\begin{definition}\label{admissible}
Une partie finie de $T$ est dite admissible si
elle correspond \`a un ensemble de racines positives clos par combinaison
lin\'eaire \`a coefficients r\'eels positifs et
dont le compl\'ementaire dans les racines positives est aussi clos.
\end{definition}
On a alors
\begin{proposition} \label{image de N}
L'image de $(N,p)$ est l'ensemble des $(x,w)$ tel que l'image de $x$ modulo 2
soit $\overline N(w)$. L'image de $N$ est l'ensemble des \'el\'ements
de $\bbZ T$ dont l'image modulo 2 est admissible.
\end{proposition}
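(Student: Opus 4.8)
The plan is to derive both assertions from three inputs already available: the homomorphism property of $(N,p)$ from Proposition~\ref{N(b)}, the equality $N(P_W)=2\bbZ T$ from Lemme~\ref{image de P_W}, and the known description of the image of $\overline N$ as the collection of admissible subsets of $T$. First I would record the necessary condition. By its very construction $\overline N$ is obtained from $(N,p)$ by reducing the first coordinate modulo $2$ and passing to the quotient by $P_W$: this reduction annihilates $N(P_W)=2\bbZ T$ and sends $P_W$ to the identity, hence factors through $W$ and produces $(\overline N,\Id)$. Consequently every $\bb\in B_W$ satisfies $N(\bb)\equiv\overline N(p(\bb))\pmod 2$, so the image of $(N,p)$ is contained in the set of pairs $(x,w)$ with $x\equiv\overline N(w)\pmod 2$.

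For the reverse inclusion I would fix a pair $(x,w)$ with $x\equiv\overline N(w)\pmod 2$ and exhibit a preimage. Let $\bw$ be the section lift of $w$; the containment just established gives $N(\bw)\equiv\overline N(w)\equiv x\pmod 2$, whence $x-N(\bw)\in 2\bbZ T=N(P_W)$ by Lemme~\ref{image de P_W}. I would then choose $\bz\in P_W$ with $N(\bz)=x-N(\bw)$ and set $\bb=\bz\bw$. Computing $(N,p)(\bb)$ inside $\bbZ T\rtimes W$ and using $p(\bz)=1$, the $W$-action on the first factor is trivial, so the first coordinates simply add: $N(\bb)=N(\bz)+N(\bw)=x$, while $p(\bb)=p(\bw)=w$. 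This yields the first assertion.

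The second assertion is then immediate by projecting onto the first factor. Indeed the image of $N$ is the set of $x\in\bbZ T$ for which some $w\in W$ satisfies $x\equiv\overline N(w)\pmod 2$, that is, the set of $x$ whose reduction modulo $2$ belongs to the image of $\overline N$. Since that image, regarded as a set of finite subsets of $T$, is exactly the set of admissible subsets (D\'efinition~\ref{admissible} together with the characterisation recalled above), the image of $N$ is precisely the set of elements of $\bbZ T$ whose reduction modulo $2$ is admissible.

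The individual steps are short, and the only point I expect to require care is the order of multiplication in the semidirect product. One must place the pure-braid correction $\bz$ on the \emph{left} of $\bw$: since $p(\bz)=1$ this makes the $\bbZ T$-coordinates add without a twist, whereas multiplying on the right would introduce a factor $w\cdot N(\bz)$ and destroy the identity $N(\bb)=x$. Beyond this bookkeeping in $\bbZ T\rtimes W$, the argument rests entirely on the already-established surjectivity of the restriction of $N$ to $P_W$ onto $2\bbZ T$ and on the external admissibility characterisation, which carries the genuine mathematical content.
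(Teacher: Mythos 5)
Your proof is correct and follows essentially the same route as the paper: both establish the forward inclusion from the definition of $\overline N$ as the mod-$2$ quotient of $(N,p)$, and both obtain the reverse inclusion by lifting $w$ to $\bw\in\Bred$, using Lemme~\ref{image de P_W} to find a pure element whose $N$-value is the discrepancy $x-N(\bw)$, and correcting $\bw$ on the left by that pure element (the paper writes it as $\bp\inv\bw$ with $N(\bp)=N(\bw)-A$, you write $\bz\bw$ with $N(\bz)=x-N(\bw)$, which is the same computation). Your closing remark about the order of multiplication in $\bbZ T\rtimes W$ is a sound precaution, implicitly present in the paper's choice as well.
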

\begin{proof} La deuxi\`eme assertion est cons\'equence de la
premi\`ere. La propri\'et\'e rappel\'ee plus haut de l'image de $\overline N$
montre que l'image de $(N,p)$ est incluse dans l'ensemble de l'\'enonc\'e.
Montrons l'inclusion en sens inverse.
Si l'image $\overline A$ modulo 2 de
$A\in\bbZ T$ est \'egale \`a $\overline N(w)$ pour un certain \'el\'ement $w\in W$,
relevons $w$ par $\bw\in\Bred$; on
a $N(\bw)-A\in 2\bbZ T$. Donc par le lemme \ref{image de P_W}
il existe $\bp\in P_W$
tel que $N(\bp)=N(\bw)-A$, ce qui implique $N(\bp\inv\bw)=A$
par \ref{N(b)} (ii).\end{proof}

\begin{proposition} \label{noyau de N}
Le noyau de $(N,p)$ est le d\'eriv\'e du groupe de tresses pur
et la restriction de $N$ \`a $P_W$ est l'homomorphisme canonique
de $P_W$ dans son ab\'elianis\'e.
\end{proposition}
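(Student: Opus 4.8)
The plan is to reduce both assertions to a single statement: that $N$ induces an isomorphism from the abelianization of $P_W$ onto $2\bbZ T$. First observe that $\ker(N,p)=\{\bb : p(\bb)=1,\ N(\bb)=0\}$ is contained in $P_W$ and equals $\ker(N|_{P_W})$; since by Lemme \ref{restriction a P_W} the map $N|_{P_W}$ is a homomorphism into the abelian group $\bbZ T$, its kernel contains $[P_W,P_W]$. Thus it suffices to prove that the induced surjection $\nu\colon P_W/[P_W,P_W]\to 2\bbZ T$ (onto by Lemme \ref{image de P_W}) is injective: this yields at once $\ker(N,p)=[P_W,P_W]$ and the identification of $N|_{P_W}$ with the canonical projection onto $P_W/[P_W,P_W]\cong 2\bbZ T$.

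Next I would bring in the conjugation action. Since $P_W$ is normal in $B_W$, conjugation makes $P_W/[P_W,P_W]$ a module over $W=B_W/P_W$ (conjugation by $P_W$ is inner, hence trivial on the abelianization), and by Lemme \ref{restriction a P_W} the map $\nu$ is $W$-equivariant, $2\bbZ T$ carrying the permutation action on reflections. As recalled in the proof of Lemme \ref{image de P_W}, $P_W$ is the normal closure in $B_W$ of $\{\bs^2:\bs\in\bS\}$; hence $P_W/[P_W,P_W]$ is generated as a $\bbZ W$-module by the classes $[\bs^2]$, and $\nu([\bs^2])=2s$. In particular source and target are each generated over $\bbZ W$ by one element per simple reflection, so inverting $\nu$ amounts to producing compatible preimages of the reflections.

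To build such an inverse, for a reflection $t$ choose $w\in W$ and $\bs\in\bS$ with $t=wsw^{-1}$ and set $\beta_t=\bw\bs^2\bw^{-1}$. A direct computation with the homomorphism $(N,p)$ of \ref{N(b)}(ii), using $w\,N(\bw^{-1})=-N(\bw)$, gives $N(\beta_t)=2wsw^{-1}=2t$. Defining $\phi(2t)=[\beta_t]$ produces a homomorphism $\phi\colon 2\bbZ T\to P_W/[P_W,P_W]$ with $\nu\circ\phi=\mathrm{id}$, so $\phi$ is a section and $\nu$ will be injective as soon as $\phi$ is onto. Now every generator $[\bb\bs^2\bb^{-1}]$ of $P_W/[P_W,P_W]$ equals $[\bw\bs^2\bw^{-1}]$ with $w=p(\bb)$ (again because conjugation by the $P_W$-part $\bb\bw^{-1}$ is trivial after abelianizing), so surjectivity of $\phi$ is exactly the claim that $[\bw\bs^2\bw^{-1}]$ depends only on the reflection $wsw^{-1}$, and not on the chosen pair $(w,s)$.

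That last claim is the crux, and the hard part. I would establish it by reduction moves on $(w,s)$. The easy move handles the right length: if $\ell(ws)<\ell(w)$, write $\bv$ for the lift of $ws$; then $w=(ws)s$ with lengths adding, so $\bw=\bv\bs$ and $\bw\bs^2\bw^{-1}=\bv\bs^2\bv^{-1}$ \emph{on the nose}, which lets me assume $\ell(ws)>\ell(w)$. The genuine difficulty is the remaining ambiguity, where the same reflection $t$ lies in a rank-two parabolic $W_{\{s,s'\}}$ with $m_{s,s'}$ odd and is presented through both $s$ and $s'$; here I must prove $[\bw\bs^2\bw^{-1}]=[\bw'\bs'^2\bw'^{-1}]$ by an explicit identity inside the braid group of that dihedral parabolic, controlling the reduced lifts via Lemme \ref{echange} and again discarding conjugations by $P_W$-elements. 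This odd-dihedral matching is precisely the step where one must exhibit a true relation in $P_W/[P_W,P_W]$ rather than merely compare values of $N$, and I expect it, not the length bookkeeping, to carry the whole weight of the proof.
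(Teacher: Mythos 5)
Your ``soft'' reduction is correct, and it is in substance the same skeleton as the paper's own proof: the paper likewise notes that $\ker(N,p)\subset P_W$, that $N|_{P_W}$ is a homomorphism into $2\bbZ T$ (so its kernel contains $D(P_W)$), and that everything follows once one produces generators of $P_W$ whose classes map bijectively onto the basis $\{2t\}_{t\in T}$ of $2\bbZ T$. Your variations on this (using that $P_W$ is the normal closure of the $\bs^2$ to get generation of the abelianization without Reidemeister--Schreier, the section $\phi$ with $\nu\circ\phi=\Id$, the computation $N(\bw\bs^2\bw\inv)=2wsw\inv$) are all fine. But your proposal never proves the statement that you yourself flag as carrying ``the whole weight of the proof'', namely that the class $[\bw\bs^2\bw\inv]\in P_W/D(P_W)$ depends only on the reflection $wsw\inv$. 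That claim is exactly what the paper delegates to Proposition \ref{generateurs de P_W}, whose proof occupies most of Section 2: a Reidemeister--Schreier argument together with two inductive lemmas (\ref{un seul generateur par reflexion} and \ref{ws tilde w reduit}) resting on Dyer's lemma \ref{Dyer} and the exchange lemma \ref{echange}. Gesturing at ``reduction moves'' and ``an explicit identity inside the dihedral parabolic'' is not a proof; as written, you have reduced the proposition to a statement of essentially the same difficulty and stopped there. That is a genuine gap.

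Moreover, your sketch of what remains underestimates it in two ways. First, after discarding the case $\ell(ws)<\ell(w)$, one can still have $\ell(ws)>\ell(w)$ with $\bw\bs\tilde\bw\notin\Bred$; this is not an ambiguity between two presentations of $t$ inside one rank-two parabolic, and the paper needs a separate induction on $\ell(\bw)$ with several cases (Lemme \ref{ws tilde w reduit}) to rewrite such a class in terms of classes $[\bv\bt^{\pm2}\bv\inv]$ with $\bv\bt\tilde\bv\in\Bred$. Second, even when both expressions $\bw\bt\tilde\bw=\bw'\bt'\tilde{\bw'}$ are reduced, the two pairs are related not by a single odd-dihedral relation but by an arbitrary chain of braid moves; the paper (Lemme \ref{un seul generateur par reflexion}) must analyze this chain, use \ref{Dyer} to see that every intermediate triple has the palindromic form $\bv_i\bs_i\tilde\bv_i$, argue that the relevant $m$'s are odd, and exhibit explicit conjugators such as $\lexp\bx\bt^{-2}\cdot\lexp{\bx\bt}\bs^{-2}\cdots$ which are products of elements $\bv\bs^{\pm2}\bv\inv\in P_W$ --- it is precisely because these conjugators lie in $P_W$ that they die in the abelianization and your argument would close up. None of this work appears in your proposal, and it cannot be bypassed by the formal framework you set up around it.
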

\begin{proof}
Le noyau de $(N,p)$ est inclus dans $P_W$.
La restriction de $(N,p)$ \`a $P_W$ qui est la restriction de $N$
est un homomorphisme $P_W\rightarrow2\bbZ T$.
Le noyau contient donc le d\'eriv\'e de $P_W$ et $N$ se factorise donc par
l'ab\'elianis\'e de $P_W$. Pour obtenir le r\'esultat il suffit de trouver un ensemble
de g\'en\'erateurs de $P_W$ qui s'envoie bijectivement sur une base de $2\bbZ T$. 
La proposition suivante nous fournit un tel ensemble, d'o\`u le r\'esultat.
\end{proof}
\begin{proposition} \label{generateurs de P_W}
Pour tout choix d'un ensemble de couples $(\bw,\bs)\in\Bred\times\bS$ tel
que les \'el\'ements
$\bw\bs\tilde\bw\in\Bred$ parcourent l'ensemble des relev\'es
des r\'eflexions de $W$ 
les \'el\'ements $\bw\bs^2\bw\inv$ engendrent $P_W$.
\end{proposition}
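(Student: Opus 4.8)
Le plan est de produire d'abord, par la m\'ethode de Reidemeister--Schreier, un syst\`eme explicite de g\'en\'erateurs de $P_W$, puis de montrer par r\'ecurrence sur la longueur que chacun d'eux appartient au sous-groupe $\Gamma$ engendr\'e par les \'el\'ements $\bw\bs^2\bw\inv$ de l'\'enonc\'e.

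Puisque $p$ induit une bijection de $\Bred$ sur $W$, l'ensemble $\Bred$ est une transversale des classes de $P_W$ dans $B_W$; comme il est clos par pr\'efixe, c'est une transversale de Schreier. Les g\'en\'erateurs de Schreier associ\'es \`a $\bw\in\Bred$ et $\bs\in\bS$ sont triviaux lorsque $\bw\bs\in\Bred$, et valent $\bw'\bs^2\bw'\inv$ (en \'ecrivant $\bw=\bw'\bs$, de sorte que $\bw'\bs\in\Bred$) lorsque $\bw\bs\notin\Bred$; ceux associ\'es \`a $\bs\inv$ sont, \`a l'inverse pr\`es, de la m\^eme forme. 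Ainsi $P_W$ est engendr\'e par les $\bw\bs^2\bw\inv$ tels que $\bw\in\Bred$, $\bs\in\bS$ et $\bw\bs\in\Bred$, et il suffit de voir qu'ils sont tous dans $\Gamma$.

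Je proc\'ederais par r\'ecurrence sur la longueur $\ell(w)$. Si $w=1$, l'\'el\'ement est $\bs^2$, qui est le g\'en\'erateur associ\'e \`a la r\'eflexion $s\in S$ (de relev\'e $\bs$), donc est dans $\Gamma$. Pour $\ell(w)\ge1$ on consid\`ere le palindrome $\bw\bs\tilde\bw$, qui rel\`eve la r\'eflexion $t=wsw\inv$, et l'on distingue deux cas. Si $\bw\bs\tilde\bw\in\Bred$, c'est un relev\'e r\'eduit de $t$, donc il co\"\i ncide avec le relev\'e canonique $\bw_t\bs_t\widetilde{\bw_t}$ fourni par l'hypoth\`ese. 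Si $(\bw,\bs)=(\bw_t,\bs_t)$ on a le g\'en\'erateur choisi et c'est fini; sinon $\bw$ et $\bw_t$ sont deux ``moiti\'es'' distinctes d'un m\^eme palindrome r\'eduit, reli\'ees par une suite de relations de tresses. Celles qui n'affectent qu'une moiti\'e ne changent pas $\bw\bs^2\bw\inv$ (elles ne modifient pas $\bw$ dans $\BW$), tandis que chaque relation \emph{centrale} (du type $\br\bs\br=\bs\br\bs$ autour de la lettre m\'ediane) remplace $\bw\bs^2\bw\inv$ par son conjugu\'e sous un g\'en\'erateur $\bu\br^2\bu\inv$ strictement plus court, lequel est dans $\Gamma$ par r\'ecurrence; on atteint ainsi le g\'en\'erateur choisi et $\bw\bs^2\bw\inv\in\Gamma$.

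Reste le cas o\`u $\bw\bs\tilde\bw\notin\Bred$. En d\'ecomposant le palindrome couche par couche \`a partir du centre, soit $\bs'$ la premi\`ere lettre de $\bw$ (lue de l'int\'erieur) pour laquelle la conjugaison d\'etruit le caract\`ere r\'eduit, et $\bc\in\Bred$ le palindrome r\'eduit de la couche pr\'ec\'edente. Si $\bs'\bc,\ \bc\bs'\in\Bred$ mais $\bs'\bc\bs'\notin\Bred$, le Lemme~\ref{echange} donne $\bs'\bc=\bc\bs'$; on en d\'eduirait que $\bs'$ commute au carr\'e correspondant, ce qui permet de supprimer la paire de lettres $\bs'$ et de remplacer $\bw\bs^2\bw\inv$ par un g\'en\'erateur associ\'e \`a un mot strictement plus court. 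Si au contraire $\bs'\bc\notin\Bred$, c'est que $\bc$ admet une \'ecriture r\'eduite commen\c cant par $\bs'$, et le m\^eme lemme ram\`ene \`a une r\'eflexion de longueur strictement moindre. Dans tous les cas on conclut par r\'ecurrence, d'o\`u $\Gamma=P_W$.

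La principale difficult\'e sera pr\'ecis\'ement cette derni\`ere \'etape combinatoire: v\'erifier, \`a l'aide du seul Lemme~\ref{echange}, que la non-r\'eductibilit\'e du palindrome entra\^\i ne effectivement une identit\'e entre carr\'es raccourcissant le mot, et que deux moiti\'es d'un m\^eme relev\'e r\'eduit de r\'eflexion sont reli\'ees par des conjugaisons sous des g\'en\'erateurs strictement plus courts (donc d\'ej\`a dans $\Gamma$). C'est l\`a qu'intervient de mani\`ere essentielle l'hypoth\`ese selon laquelle les $\bw\bs\tilde\bw$ parcourent tous les relev\'es des r\'eflexions. On notera enfin que la Proposition~\ref{N(b)} fournit $N(\bw\bs^2\bw\inv)=2t$, ce qui servira \`a l'\'etape suivante.
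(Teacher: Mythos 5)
You begin exactly as the paper does: Reidemeister--Schreier applied to $P_W\subset B_W$ with the transversal $\Bred$, yielding the generators $\bw\bs^2\bw\inv$ with $\bw\bs\in\Bred$; and your two reduction steps (pass, by induction on $l(\bw)$, from an arbitrary such generator to one whose palindrome $\bw\bs\tilde\bw$ is reduced, then to the chosen representative of each reflection) are precisely the content of the paper's Lemme \ref{un seul generateur par reflexion} and Lemme \ref{ws tilde w reduit}. The gap is that this combinatorial core --- which the paper proves via Dyer's Lemme \ref{Dyer} and a delicate multi-case induction --- is asserted rather than established in your text (you yourself defer it as \og la principale difficult\'e \fg), and the one precise claim you do make in the non-reduced case is false. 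You claim that if the inner palindrome $\bv\bs\tilde\bv$ satisfies $\bs'(\bv\bs\tilde\bv)\in\Bred$, $(\bv\bs\tilde\bv)\bs'\in\Bred$ but $\bs'(\bv\bs\tilde\bv)\bs'\notin\Bred$, then the monoid identity $\bs'(\bv\bs\tilde\bv)=(\bv\bs\tilde\bv)\bs'$ given by Lemme \ref{echange} implies that $\bs'$ commutes \emph{in the group} $B_W$ with $\bv\bs^2\bv\inv$, so that the pair of letters $\bs'$ may be deleted. A monoid identity between palindromes does not transfer to a group identity between conjugates of squares; this configuration (where $m_{s',r}$ is even) is exactly the one the paper must treat by the long cases (a)--(b) in the proof of Lemme \ref{ws tilde w reduit}, and its conclusion there is conjugacy to other generators \emph{by} shorter generators, not commutation.

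Here is a concrete counterexample. Take $W$ of type $I_2(4)$, $S=\{s,r\}$, and the Schreier generator with $\bw=\bs\br$ and central letter $\bs$: one has $\bw\bs=\bs\br\bs\in\Bred$ and $\bw\bs\tilde\bw=\bs\br\bs\br\bs\notin\Bred$; the inner palindrome is $\br\bs\br$ and your bad letter is $\bs'=\bs$, since $\bs\br\bs\br$ and $\br\bs\br\bs$ are reduced while $\bs\br\bs\br\bs$ is not, and indeed $\bs(\br\bs\br)=(\br\bs\br)\bs$ is the braid relation. Your argument concludes $\bs(\br\bs^2\br\inv)\bs\inv=\br\bs^2\br\inv$. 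But the braid relation actually gives $\bs\br\bs^2\br\inv\bs\inv=\br\inv\bs^2\br$ (this identity is equivalent to $(\br\bs\br\bs)\bs=\bs(\bs\br\bs\br)$, and both sides equal $\bs\br\bs\br\bs$), so your conclusion is equivalent to $\bs^2\br^2=\br^2\bs^2$ in the Artin group of type $I_2(4)$. That is false: under the paper's homomorphism of section 5 (case $n=2$), $\bs\mapsto\bs_1^2$, $\br\mapsto\bs_2$, into $B_3$, it would force $\bs_1^4$ to commute with $\bs_2^2$; yet putting $a_1=\bs_2\bs_1^2\bs_2\inv$ and $a_2=\bs_2^2$, the element $\bs_1^2a_1a_2=(\bs_1\bs_2)^3$ is central in $B_3$, so conjugation by $\bs_1^4$ is conjugation by $(a_1a_2)^{-2}$, and since $a_1,a_2$ generate a free group of rank $2$ (the classical structure of $P_3$, recalled in the paper's type $A_2$ discussion and Proposition \ref{U_I libre}), the centralizer of $a_2$ there is $\langle a_2\rangle$, whence $(a_1a_2)^{-2}a_2(a_1a_2)^2\neq a_2$. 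So in your situation $\bw\bs^2\bw\inv$ equals $\br^{-2}(\br\bs^2\br\inv)\br^2$, a conjugate of the shorter generator by another generator, not the shorter generator itself; recovering this correct statement in general is what requires Lemmes \ref{Dyer}, \ref{un seul generateur par reflexion} and \ref{ws tilde w reduit}. A secondary structural flaw: because you peel $\bw$ from the centre, after treating the innermost bad letter you must still conjugate by the outer part $\bw_2$ of $\bw$, producing elements $(\bw_2\bv)\bt^2(\bw_2\bv)\inv$ in which $\bw_2\bv$ need not be reduced, hence not covered by your induction hypothesis; the paper peels one letter at a time from the left precisely to avoid this.
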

On retrouve ainsi le r\'esultat connu pour les groupes de tresses de
type fini (y compris ceux associ\'es \`a des groupes de r\'eflexions complexes, \cf
[\cite{BrMaRo}, 2.2])
que l'ab\'elianis\'e de $P_W$ est le groupe ab\'elien libre engendr\'e par
les r\'eflexions de $W$.
\begin{proof}
Appliquons la m\'ethode de Reidemeister-Schreier ([\cite{Johnson}, chapitre 9])
au  sous-groupe $P_W$ de $B_W$. Le quotient \'etant $W$, on peut prendre
comme repr\'esentants des classes modulo $P_W$ les \'el\'ements de $\Bred$.
On en d\'eduit que $P_W$ est engendr\'e par les $\bv\bs q(vs)\inv$, o\`u $\bv$
d\'ecrit $\Bred$, $\bs$ d\'ecrit $\bS$ et o\`u $q$ est la
section de $p$. Or si $l(vs)<l(v)$, on peut \'ecrire $\bv=\bw\bs$ avec
$\bw\in\Bred$ et l'on a $\bv\bs q(vs)\inv=\bw\bs^2\bw\inv$. Si $l(vs)>l(v)$,
on a $\bv\bs q(vs)\inv=1$. Donc $P_W$ est engendr\'e par les
$\bw\bs^2\bw\inv$ avec $l(ws)>l(w)$.
Nous devons voir qu'on peut se limiter aux $\bw$ et $\bs$ tels que
$\bw\bs\tilde\bw$ soit dans $\Bred$ et parcoure un ensemble de relev\'es des
r\'eflexions de $W$. C'est l'objet des deux lemmes suivants.
\begin{lemme}\label{un seul generateur par reflexion}
Si $\bt$ et $\bt'$ sont deux \'el\'ements de $\bS$ et $\bw$ et $\bw'$ deux
\'el\'ements de $\Bred$ tels que
$\bw\bt\tilde\bw=\bw'\bt'\tilde{\bw'}\in\Bred$, alors $\bw\bt\bw\inv$ et
$\bw'\bt'\bw^{\prime-1}$ sont conjugu\'es par un produit d'\'el\'ements de la
forme $\bv\bs^2\bv\inv$ avec $\bs\in\bS$, $\bv\bs\in\Bred$ et
$l(\bv)<l(\bw)$.
\end{lemme}
\begin{lemme}\label{ws tilde w reduit}
Si $\bw\in\Bred$ et $\bs\in\bS$ sont tels que $\bw\bs\in\Bred$ et
$\bw\bs\tilde\bw\not\in\Bred$, alors $\bw\bs^2\bw\inv$ est produit
d'\'el\'ements de la forme
$\bv\bt^{\pm2}\bv\inv$
avec $\bt\in\bS$, $l(\bv)<l(\bw)$ et $\bv\bt\tilde\bv\in\Bred$.
\end{lemme}
Avant de d\'emontrer les lemmes, rappelons le r\'esultat de Dyer:
\begin{lemme} \label{Dyer} [\cite{Dyer}, 1.4]
Si $s_1\ldots s_{2n+1}$ est une d\'ecomposition r\'eduite
d'une r\'eflexion de $W$ alors cette r\'eflexion s'\'ecrit aussi $s_1\ldots s_n
s_{n+1}s_n\ldots s_1$.
\end{lemme}
\begin{proof}[D\'emonstration du lemme \ref{un seul generateur par reflexion}]
Puisque $\bw\bt\tilde\bw=\bw'\bt'\tilde{\bw'}$ dans le mono\"\i de
$\BW$, on passe d'une \'ecriture r\'eduite  correspondant \`a la
d\'ecomposition en produit comme dans le membre de gauche, c'est-\`a-dire
form\'ee par concat\'enation
d'une \'ecriture r\'eduite de $\bw$, de $\bt$ et d'une \'ecriture r\'eduite de
$\tilde{\bw}$ \`a une \'ecriture  r\'eduite analogue correspondant au membre
de droite par une suite de relations de
tresses. Pr\'ecis\'ement,
il existe une suite de triplets que nous supposerons non redondante
$(\bv_i,\bs_i,\bw_i), i=1\ldots n$
tels que $\bv_i$ et $\bw_i$ soient dans $\Bred$ et de m\^eme longueur, que
$\bs_i\in\bS$, que $(\bv_1,\bs_1,\bw_1)=(\bw,\bt,\tilde\bw)$,
que $(\bv_n,\bs_n,\bw_n)=(\bw',\bt',\tilde{\bw'})$, que
$\bv_i\bs_i\bw_i=\bv\bt\tilde\bw$ pour tout $i$ et que l'on passe de
chaque triplet au suivant en appliquant une seule relation de tresse \`a une
\'ecriture de $\bv_i\bs_i\bw_i$ obtenue par concat\'enation
d'une \'ecriture de
chacun des trois termes puis par red\'ecoupage en trois termes de
l'\'ecriture obtenue. L'image dans $W$ de tous
les produits $\bv_i\bs_i\bw_i$ est la m\^eme
r\'eflexion et par \ref{Dyer} on voit que
$\bw_i=\tilde\bv_i$.
On passe de $\bv_{i-1}\bs_{i-1}\tilde\bv_{i-1}$ \`a
$\bv_i\bs_i\tilde\bv_i$ en appliquant une relation de tresses qui fait
intervenir le facteur $\bs_{i-1}$ car on a suppos\'e la suite de triplets
non redondante.
C'est donc la relation entre $\bs$ et
$\bs_{i-1}$ pour un certain $\bs\in\bS$.
Comme $\bv_{i-1}\bs_{i-1}\tilde\bv_{i-1}$ est dans $\Bred$ l'\'el\'ement
$\bv_{i-1}$
ne peut pas \^etre divisible \`a droite par un \'el\'ement de longueur plus
grande que $\left[\frac{ m_{s,s_{i-1}}-1}
2\right]$ du mono\"\i de engendr\'e par $\bs$ et
$\bs_{i-1}$.
Donc $m_{s,s_{i-1}}$ est impair et on a $\bs_i=\bs$.
Consid\'erons alors la suite $\bv_i\bs_i\bv_i\inv$
d'\'el\'ements de $B_W$.
Pour prouver le lemme nous montrons qu'on passe d'un terme de cette
suite au terme suivant
par une suite de conjugaisons comme dans l'\'enonc\'e: un terme de la suite
est de la forme $\bx\bs\bt\ldots\bu\ldots\bt\inv\bs\inv\bx\inv$ o\`u
$\bu$ vaut $\bs$ ou $\bt$ suivant la parit\'e de $\frac
{m_{s,t}-1} 2$.
Ce terme est
\'egal \`a $\bx\bt\inv\bs\inv\ldots\bu'\ldots\bs\bt\bx\inv$,
o\`u $\bu'$ vaut $\bt$ ou $\bs$ suivant que $\bu$ vaut $\bs$ ou $\bt$.
On peut l'\'ecrire
$$\lexp{\lexp\bx\bt^{-2}.\lexp{\bx\bt}\bs^{-2}.\lexp{\bx\bt\bs}\bt^{-2}\ldots}
(\bx\bt\bs\ldots\bu'\ldots\bs\inv\bt\inv\bx\inv).$$
Ceci donne le r\'esultat puisque le terme suivant de la suite est
$\bx\bt\bs\ldots\bu'\ldots\bs\inv\bt\inv\bx\inv$.
\end{proof}
\begin{proof}[D\'emonstration du lemme \ref{ws tilde w reduit}]
On fait une r\'ecurrence sur $l(\bw)$. 
Si $\bw\bs\tilde\bw$ n'est pas dans $\Bred$,
\'ecrivons $\bw=\bs'\bw_1$. On peut appliquer l'hypoth\`ese de r\'ecurrence \`a
$\bw_1\bs^2\bw_1\inv$: cet \'el\'ement est produit d'\'el\'ements de la forme
(resp.~est de la forme) $\bv\bt^2\bv\inv$ avec
$\bv\bt\tilde\bv\in\Bred$ et $l(\bv)<l(\bw_1)<l(\bw)$ (resp.~$l(\bv)<l(\bw)$).
Donc $\bw\bs^2\bw\inv$ est produit des $\bs'\bv\bt^2\bv\inv\bs^{\prime-1}$.
Pour un tel terme, si $\bs'\bv\bt\tilde\bv\bs'$ est dans $\Bred$, on a fini
(ceci ne se produit pas si $\bv=\bw_1$).
Sinon, faisons deux cas selon que $\bs'\bv\bt$ est ou n'est pas dans $\Bred$.
Si cet \'el\'ement n'est pas dans $\Bred$,
il y a deux possibilit\'es (\cf \ref{echange}):
ou bien $\bs'\bv=\bv\bt$ et alors
$\bs'\bv\bt^2\bv\inv\bs^{\prime-1}=\bv\bt^2\bv\inv$, et on a fini;
ou bien $\bv=\bs'\bv'$ et
$\bs'\bv\bt^2\bv\inv\bs^{\prime-1}=
\bs^{\prime2}\bv'\bt^2\bv^{\prime-1}\bs^{\prime-2}$ et on finit en
appliquant l'hypoth\`ese de r\'ecurrence \`a $\bv'\bt^2\bv^{\prime-1}$.
Si $\bs'\bv\bt\in\Bred$, alors faisons de nouveau deux cas:
ou bien $\bs'\bv\bt\tilde\bv\not\in\Bred$
ou bien $\bs'\bv\bt\tilde\bv=\bv\bt\tilde\bv\bs'$ (toujours par
\ref{echange}).
\item{$\bullet$} Dans le premier cas
le lemme d'\'echange dit que $\bv\bt\tilde\bv=\bs'\bv\bt\tilde{\hat\bv}$, o\`u
$\hat\bv$ s'obtient en supprimant un g\'en\'erateur dans l'\'ecriture de $\bv$.
On peut alors appliquer \ref{Dyer} qui montre que
$\bv\bt\tilde\bv=\bs'\bu\bt'\tilde\bu \bs'$, o\`u $\bv=\bu\bt'$ avec
$\bu\in\Bred$ et $\bt'\in\bS$.
Par \ref{un seul generateur par reflexion}, on en d\'eduit que $\bv\bt\bv\inv$
est conjugu\'e de $\bs'\bu\bt'\bu\inv\bs^{\prime-1}$
par un produit d'\'el\'ements de la forme $\bx\br^2\bx\inv$ avec $\br\in\bS$,
$\bx\br\tilde\bx\in\Bred$ et $l(\bx)<l(\bv)$. On en d\'eduit que 
$\bs'\bv\bt^2\bv\inv\bs^{\prime-1}$ est conjugu\'e de
$\bs^{\prime2}\bu\bt^{\prime2}\bu\inv\bs^{\prime-2}$
par un produit d'\'el\'ements de la forme $\bs'\bx\br^2\bx\inv\bs^{\prime-1}$
avec $l(\bs'\bx)<l(\bw)$, ce qui permet de terminer ce cas par r\'ecurrence.
\item{$\bullet$} Dans le deuxi\`eme cas,
on passe de $\bs'\bv\bt\tilde\bv$ \`a $\bv\bt\tilde\bv\bs'$ par l'application
d'une suite de relations de tresses (ces \'el\'ements sont \'egaux et sont dans
$\Bred$). Consid\'erons la premi\`ere relation dans cette suite qui fait intervenir
$\bs'$: il existe $\br$ divisant \`a gauche $\bv\bt\tilde\bv$ dans $\Bred$ tel que
cette relation soit la relation entre $\bs'$ et $\br$, c'est-\`a-dire que
$\bv\bt\tilde\bv$ a une \'ecriture dans $\Bred$ qui commence
par $\bw'_{\bs',\br}$, o\`u $\bw'_{\bs',\br}$ est d\'efini par $\bw_{\bs',\br}=\bs'
\bw'_{\bs',\br}$. On a alors ou bien
\begin{enumerate}
\item[(a)]$\bv\bt\tilde\bv=\bw'_{\bs',\br}$
\end{enumerate}
ou bien
\begin{enumerate}
\item[(b)]
$\bv\bt\tilde\bv=\bw'_{\bs',\br}\bx\bu\tilde\bx\tilde\bw'_{\bs',\br}$ pour un
certain $\bx$ avec $l(\bx)<l(\bv)$, $\bu\in\bS$ et $\bs'$ ou $\br$, suivant la
parit\'e de $m_{s',r}$ commute \`a
$\bx\bu\tilde\bx$ d'apr\`es \ref{echange}.
\end{enumerate}

Dans le cas (a), posons $m_{s',r}=2k$, on a
$\bs'\bv\bt\bv\inv\bs^{\prime-1}=
\underbrace{\bs'\br\bs'\ldots}_{k+1}
\underbrace{\ldots\br\inv\bs^{\prime -1}\br\inv}_{k-1}\bs^{\prime-1}$,
ce qui, d'apr\`es la relation de tresses, vaut
$\underbrace{\br\inv\bs^{\prime-1}\br\inv\ldots}_{k-1}
\underbrace{\ldots\br\bs'\br}_k$,
et par le m\^eme argument qu'\`a la fin de la d\'emonstration
du lemme \ref{ws tilde w reduit}
cet \'el\'ement est conjugu\'e
par des \'el\'ements du type voulu \`a
$\underbrace{\br\bs'\br\ldots}_k
\underbrace{\ldots\br\inv\bs^{\prime-1}\br\inv}_{k-1}$
et on a fini (remarquer que
$\underbrace{\br\bs'\br\ldots\br\bs'\br}_{2k-1}$ est
dans $\Bred$).

Dans le cas (b), par \ref{un seul generateur par reflexion}
$\bv\bt^2\bv\inv$ est conjugu\'e par des \'el\'ements du type voulu $\bz\bs_1^2\bz\inv$
avec $l(\bz)<l(\bv)$ \`a 
$\bw'_{\bs',\br}\bx\bu^2\bx\inv(\bw'_{\bs',\br})\inv$, donc $\bs'\bv\bt\bv\inv
\bs^{\prime-1}$
est conjugu\'e par les $\bs'\bz\bs_1^2\bz\inv\bs^{\prime-1}$ (auquel on peut
appliquer l'hypoth\`ese de r\'ecurrence) \`a
$\bw_{\bs',\br}\bx\bu^2\bx\inv\bw_{\bs',\br}\inv$. Il reste \`a voir que ce dernier
\'el\'ement est bien produit d'\'el\'ements du type voulu.
Supposons que c'est $\br$ qui commute \`a $\bx\bu\tilde\bx$. On \'ecrit alors
l'\'el\'ement consid\'er\'e sous la forme $\by\br\bx\bu^2\bx\inv\br\inv\by\inv$.
On peut appliquer l'hypoth\`ese de r\'ecurrence \`a $\br\bx\bu^2\bx\inv\br\inv$
et on termine \`a nouveau par r\'ecurrence.
\end{proof}

On peut alors terminer la d\'emonstration de la proposition.
On sait que $P_W$ est engendr\'e par les $\bw\bs^2\bw\inv$ o\`u $\bw\bs\in\Bred$.
Par r\'ecurrence sur la longueur de $\bw$ on voit en utilisant les deux
lemmes
que $\bw\bs^2\bw\inv$ s'exprime \`a l'aide d'\'el\'ements $\bv\bt^2\bv\inv$ pour des
couples $(\bv,\bt)$ tels que les \'el\'ements
$\bv\bt\tilde\bv$ soient dans $\Bred$ et aient des
images toutes distinctes dans $W$.
\end{proof}
\begin{corollaire} (de \ref{noyau de N}) \label{N(b)=N(b')}
Si $\bb$ et $\bb'$ sont des \'el\'ements de $B_W$, on a $N(\bb)=N(\bb')$ si et
seulement si $\bb\inv\bb'\in D(P_W)$.
\end{corollaire}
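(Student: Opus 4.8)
The plan is to derive the corollary entirely from Proposition~\ref{noyau de N}, which identifies $\ker(N,p)$ with $D(P_W)$, the derived subgroup of the pure braid group. The one genuine subtlety is that the hypothesis $N(\bb)=N(\bb')$ bears only on the first coordinate of the homomorphism $(N,p)$, so before anything else I would argue that this equality already forces the second coordinates to agree, i.e. that $p(\bb)=p(\bb')$.

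First I would dispose of the easy implication. If $\bb\inv\bb'\in D(P_W)$, then by Proposition~\ref{noyau de N} we have $\bb\inv\bb'\in\ker(N,p)$, hence $(N,p)(\bb)=(N,p)(\bb')$ since $(N,p)$ is a homomorphism; reading off the first coordinate gives $N(\bb)=N(\bb')$.

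For the converse the key step, and the one I expect to be the only real obstacle, is to recover the $W$-component from the $N$-component by reducing modulo $2$. Since $W$ permutes the reflections, the reduction $\bbZ T\to\bbZ/2\bbZ T$ is $W$-equivariant, so it induces a group homomorphism $\bbZ T\rtimes W\to\bbZ/2\bbZ T\rtimes W$; composing it with $(N,p)$ yields a homomorphism whose restriction to $P_W$ is trivial, because $N(P_W)=2\bbZ T$ by Lemme~\ref{image de P_W} dies modulo $2$ and $p$ kills $P_W$. This composite therefore factors through $p$, and is exactly the homomorphism $(\overline N,\Id)$ introduced just before Definition~\ref{admissible}; concretely, the class of $N(\bb)$ modulo $2$ equals $\overline N(p(\bb))$ for every $\bb$. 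Consequently $N(\bb)=N(\bb')$ gives $\overline N(p(\bb))=\overline N(p(\bb'))$, and the injectivity of $\overline N$ recalled in the text yields $p(\bb)=p(\bb')$.

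Once both coordinates are controlled, the conclusion is formal: $N(\bb)=N(\bb')$ together with $p(\bb)=p(\bb')$ means $(N,p)(\bb)=(N,p)(\bb')$, that is $\bb\inv\bb'\in\ker(N,p)=D(P_W)$ by Proposition~\ref{noyau de N}. I would emphasize that the statement is not a pure triviality precisely because $N$ alone is not a homomorphism; it is the passage to the mod-$2$ reduction, combined with the injectivity of $\overline N$, that lets one upgrade equality of the $N$-values to equality of the full images under $(N,p)$, after which everything reduces to the description of the kernel.
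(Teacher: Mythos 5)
Your proof is correct and takes essentially the same route as the paper: both directions come down to Proposition~\ref{noyau de N}'s identification of $\ker(N,p)$ with $D(P_W)$, the crux being that $N(\bb)=N(\bb')$ forces $p(\bb)=p(\bb')$. Your mod-$2$ argument via the injectivity of $\overline N$ is exactly the justification that the paper leaves implicit when it simply asserts that two elements with the same image under $N$ have the same image in $W$.
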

\begin{proof} Si deux \'el\'ements ont m\^eme image par $N$, ils ont m\^eme
image dans $W$. Donc le quotient $\bb\inv\bb'$ est dans $P_W$ et
est dans le noyau de la restriction de $N$ \`a $P_W$ qui est un homomorphisme.
D'o\`u l'implication directe. R\'eciproquement,
si $\bb\inv \bb'\in D(P_W)$ alors en particulier $\bb$ et $\bb'$ ont m\^eme
image $w$
dans $W$. Si on pose $N(\bb)=(w,A)$ et $N(\bb')=(w,A')$ on a $1=N(\bb\inv
\bb')=(1,A'-A)$, donc $A=A'$.
\end{proof}
\begin{remarque}
\begin{enumerate}
\item La restriction via l'homomorphisme canonique de $N$ au
mono\"\i de de tresses $\BW$ est croissante (\`a valeurs dans $\bbN T$)
si l'on prend comme relation d'ordre 
dans le mono\"\i de la relation de divisibilit\'e \`a gauche
et dans $\bbZ T$ la relation donn\'ee par le fait que les \'el\'ements
de $\bbN T$ sont plus grands que 0.
\item
L'image du mono\"\i de $\BW$ par $N$ est un sous-mono\"\i de de $\bbN T$
qui engendre le groupe $\bbZ T$. Question: d\'eterminer ce
sous-mono\"\i de. M\^eme question pour l'image des \'el\'ements purs
positifs.
\end{enumerate}
\end{remarque}
\section{Pr\'esentations de certains sous-groupes du groupe de tresses}
Nous allons appliquer la m\'ethode de Reidemeister-Schreier pour donner
une pr\'esentation de certains sous-groupes de $B_W$ et en particulier du
groupe de tresses pur. Soit $I\subset S$. Notons $D_I$ l'image r\'eciproque de
$W_I$ dans $B_W$. Ce sous-groupe contient le groupe de tresses pur et le
sous-groupe parabolique $B_\bI$ de $B_W$ o\`u on a not\'e
$\bI$ la partie de $\bS$ dont
l'image dans $W$ est $I$. On dit qu'un \'el\'ement $\bw\in\Bred$ est
$\bI$-r\'eduit
si pour tout $\bs\in\bI$ on a $\bs\bw\in\Bred$. Ceci est \'equivaut \`a dire que
l'image $w$ de $\bw$ dans $W$ est $I$-r\'eduite, c'est-\`a-dire est de longueur
minimale dans sa classe \`a droite modulo $W_I$. On a de m\^eme une notion
d'\'el\'ements r\'eduits-$\bI$ et d'\'el\'ements $\bI$-r\'eduits-$\bJ$ pour deux parties
$\bI$ et $\bJ$ de $\bS$.
Commen\c cons par g\'en\'eraliser \ref{generateurs de P_W}.
\begin{proposition} \label{generateurs de D_I}
Pour tout choix d'un ensemble de couples $(\bw,\bs)\in\Bred\times\bS$ tel
que les \'el\'ements
$\bw\bs\tilde\bw\in\Bred$ parcourent l'ensemble des relev\'es
des r\'eflexions $I$-r\'eduites de $W$,
les \'el\'ements $\bw\bs^2\bw\inv$ et $I$ engendrent $D_I$.
\end{proposition}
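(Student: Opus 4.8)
The plan is to generalize the Reidemeister–Schreier computation from Proposition \ref{generateurs de P_W}, but now applied to the subgroup $D_I$ of $B_W$ rather than to $P_W$ itself. The quotient $B_W/D_I$ is in bijection with the set of left cosets $W/W_I$, and a natural system of coset representatives is furnished by the $\bI$-reduced elements of $\Bred$ (those $\bw$ with $\bs\bw\in\Bred$ for all $\bs\in\bI$). First I would set up the Reidemeister–Schreier machinery with this transversal: for each $\bI$-reduced $\bv\in\Bred$ and each $\bs\in\bS$, the Schreier generator is $\bv\bs\, q(vs)\inv$, where $q$ sends an element of $W/W_I$ to its chosen $\bI$-reduced representative. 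As in the earlier proposition, when $\bv\bs$ is itself $\bI$-reduced this generator is trivial, and when $l(vs)<l(v)$ in the appropriate sense one rewrites $\bv=\bw\bs$ to recover a generator of the form $\bw\bs^2\bw\inv$. The new phenomenon is the case where multiplying by $\bs$ pushes $\bv$ into the coset of a longer element within $W_I$: here the Schreier generator lands inside the parabolic subgroup $B_\bI$, contributing the generators coming from $I$ itself. This is why the statement asserts that $D_I$ is generated by the $\bw\bs^2\bw\inv$ \emph{together with} $\bI$.

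\textbf{Reducing to reflections that are $I$-reduced.}

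Having obtained a generating set consisting of the elements of $\bI$ and the elements $\bw\bs^2\bw\inv$ with $\bw\bs\in\Bred$ and $\bw$ ranging over $\bI$-reduced representatives, the next task is to cut the second family down so that $\bw\bs\tilde\bw$ runs only over lifts of the \emph{$I$-reduced} reflections. The two technical lemmas \ref{un seul generateur par reflexion} and \ref{ws tilde w reduit} are exactly the tools for this: the first shows that all generators attached to the same reflection are conjugate modulo lower-length generators, and the second shows that a generator $\bw\bs^2\bw\inv$ with $\bw\bs\tilde\bw\not\in\Bred$ can be rewritten using strictly shorter ones. I would apply these lemmas verbatim, exactly as in the proof of \ref{generateurs de P_W}, by induction on $l(\bw)$. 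The extra ingredient to verify is that the conjugations and rewritings produced by these two lemmas stay inside $D_I$; since all the auxiliary conjugating elements $\bx\br^2\bx\inv$ belong to $P_W\subset D_I$, this is automatic.

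\textbf{Identifying which reflections survive.}

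The remaining point is bookkeeping: I must check that after the reduction the surviving conjugates $\bw\bs\tilde\bw$ are precisely lifts of the $I$-reduced reflections, rather than all reflections of $W$. The reflections that fail to be $I$-reduced are exactly those $t$ for which $w_t t w_t\inv\in W_I$ for the relevant representative—equivalently, the reflections lying in conjugates that collapse into the parabolic part; their corresponding squares $\bw\bs^2\bw\inv$ are absorbed into the contribution of $\bI$ and should not be counted separately. I expect the main obstacle to lie here, in confirming that the $I$-reduced condition on a reflection corresponds cleanly to the condition $\bw$ being $\bI$-reduced in the generator $\bw\bs\tilde\bw$, and that no double-counting occurs at the boundary where a reflection is conjugate into $W_I$. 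Once this matching is established, the statement follows: $D_I$ is generated by $I$ and by one representative $\bw\bs^2\bw\inv$ for each lift of an $I$-reduced reflection, exactly as claimed.
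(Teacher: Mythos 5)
Your first two steps are sound, and they essentially reconstruct material that is in the paper: the paper obtains the initial generating set more quickly, from the fact that $D_I$ is generated by $\bI$ and $P_W$ together with Proposition \ref{generateurs de P_W}, while your direct Reidemeister--Schreier computation with the $\bI$-reduced transversal is what the paper carries out just after this proposition to get the presentation of $D_I$; either starting point is legitimate. The genuine gap is in your third step, which you dismiss as bookkeeping: it is in fact the entire content of the proposition. Lemmas \ref{un seul generateur par reflexion} and \ref{ws tilde w reduit} are blind to $I$-reducedness; applied verbatim, as in the proof of \ref{generateurs de P_W}, they only yield the weaker statement that $D_I$ is generated by $\bI$ together with one element $\bw\bs^2\bw\inv$ for \emph{every} reflection of $W$. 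Generators attached to non-$I$-reduced reflections really do survive your steps 1 and 2: in type $A_3$ with $I=\{s_1\}$, take $\bw=\bs_2$, $\bs=\bs_1$; then $\bw\bs=\bs_2\bs_1$ is $\bI$-reduced and $\bw\bs\tilde\bw=\bs_2\bs_1\bs_2\in\Bred$, yet the reflection $s_2s_1s_2=s_1s_2s_1$ is not $I$-reduced. Moreover your proposed criterion for failure of $I$-reducedness (that the conjugate attached to the representative collapses into $W_I$) misclassifies exactly this example: $s_2s_1s_2\notin W_I$ and $s_2\inv(s_2s_1s_2)s_2=s_1$ or $s_1\inv(s_1s_2s_1)s_1=s_2$ depending on the palindrome chosen, whereas the correct criterion is that some reduced expression of the reflection begins with an element of $I$.

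What is missing is the mechanism by which these surviving generators are eliminated, and it uses two ingredients your proposal never invokes: Dyer's Lemma \ref{Dyer}, and conjugation by the generators of $\bI$ themselves. The paper argues by induction on $l(\bw)$: if $\bw\bs\tilde\bw$ is not $\bI$-reduced, it has a reduced writing beginning with some $\bs'\in\bI$, hence by Lemma \ref{Dyer} it equals $\bs'\bv\br\tilde\bv\bs'$ with $\br\in\bS$ and $l(\bv)<l(\bw)$; Lemmas \ref{un seul generateur par reflexion} and \ref{ws tilde w reduit} then show that $\bw\bs^2\bw\inv$ is conjugate to $\bs'(\bv\br^2\bv\inv)\bs^{\prime-1}$ by elements $\bu\bt^2\bu\inv$ of strictly smaller length, and the induction hypothesis applies both to these conjugators and to $\bv\br^2\bv\inv$; the outer conjugation by $\bs'$ is harmless precisely because $\bs'\in\bI$ is among the allowed generators. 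In the $A_3$ example this is what produces the identity $\bs_2\bs_1^2\bs_2\inv=\bs_1\inv\bs_2^2\bs_1$: the two technical lemmas can only trade $\bs_2\bs_1^2\bs_2\inv$ for $\bs_1\bs_2^2\bs_1\inv$ (another generator attached to the \emph{same} non-$I$-reduced reflection, modulo shorter pure conjugators), and it is only the Dyer rewriting plus conjugation by $\bs_1\in\bI$ that converts it into the generator $\bs_2^2$ of an $I$-reduced reflection. Without this step the proposition is not proved.
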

\begin{proof}
Le groupe $D_I$ est engendr\'e par $\bI$ et $P_W$, donc par $\bI$ et un ensemble
de $\bw\bs^2\bw\inv$ comme dans \ref{generateurs de P_W}. Il suffit de voir que
l'on peut retirer de cet ensemble les \'el\'ements tels que $\bw\bs\tilde\bw$
ne soit pas $\bI$-r\'eduit. Par r\'ecurrence sur la longueur de $\bw$, il suffit
de voir que si $\bw\bs\tilde\bw$ n'est pas $\bI$-r\'eduit
$\bw\bs^2\bw\inv$ s'\'ecrit
comme produit d'\'el\'ements de $\bI$ et
d'\'el\'ements analogues $\bw'\bs^{\prime2}\bw^{\prime-1}$ avec
$l(\bw')<l(\bw)$ ou de leurs inverses.
Si $\bw\bs\tilde\bw$ n'est pas $\bI$-r\'eduit,
il a une \'ecriture qui commence par
$\bs'\in\bI$ et donc par \ref{Dyer} s'\'ecrit $\bs'\bv\br\tilde\bv\bs'$ avec
$\br\in\bS$; par les
lemmes \ref{un seul generateur par reflexion} et \ref{ws tilde w reduit}
$\bw\bs^2\bw\inv$ est conjugu\'e \`a $\bs'\bv\br^2\bv\inv\bs^{\prime-1}$
par des \'el\'ements de la forme
$\bu\bt^2\bu\inv$ avec $\bt\in\bS$,
$l(\bu)<l(\bw)$ et $\bu\bt\tilde\bu\in\Bred$.
Ceci donne le r\'esultat.
\end{proof}
Cherchons maintenant une pr\'esentation de $D_I$. 
Pour cela nous appliquons la m\'ethode de Reidemeister-Schreier.
Un ensemble de repr\'esentants du quotient $D_I\backslash B_W$
est form\'e par les \'el\'ements
$\bI$-r\'eduits de $\Bred$ car $D_I\backslash B_W\simeq W_I\backslash W$.
On note $[\bb]$ le repr\'esentant de la classe $\bb D_I$.
Les \'el\'ements $\bw\bs[\bw\bs]\inv$ o\`u $\bw$
parcourt l'ensemble des \'el\'ements
$\bI$-r\'eduits et o\`u $\bs$ parcourt $\bS$ 
forment donc un ensemble g\'en\'erateur de
$D_I$. Si $\bw\bs$ est dans $\Bred$ et
est $\bI$-r\'eduit le g\'en\'erateur obtenu vaut 1.
Si $\bw\bs$ est dans $\Bred$ mais non $\bI$-r\'eduit, cela signifie qu'il existe
$\bt\in\bI$ et $\bv\in\Bred$ tel que $\bw\bs=\bt\bv$. Alors $\bv$ est
$\bI$-r\'eduit et est \'egal \`a $[\bw\bs]$. Le g\'en\'erateur
obtenu est $\bt$ (ceci inclut le cas o\`u $\bw=1$, donc $\bt$ parcourt tout
$\bI$). Si $\bw\bs$ n'est pas dans $\Bred$ alors $\bw=\bv\bs$,
l'\'el\'ement
$\bv$ est $\bI$-r\'eduit-$\{\bs\}$ et est  \'egal \`a $[\bw\bs]$
car $\bw\bs=\bv\bs^2=(\bv\bs^2\bv\inv)\bv$. Le g\'en\'erateur
obtenu est $\bv\bs^2\bv\inv$. On obtient donc que $D_I$ est engendr\'e par $\bI$
et par les \'el\'ements $\bw\bs^2\bw\inv$ o\`u $\bs$ est dans $\bS$ et
$\bw\bs\in\Bred$ est $\bI$-r\'eduit.

\begin{definition}\label{a_v,s}
Pour $\bw\in\Bred$ et $\bs\in\bS$ tels que
$\bw\bs\in\Bred$ soit $\bI$-r\'eduit, on pose $a_{\bw,\bs}=\bw\bs^2\bw\inv$.
\end{definition}

La m\'ethode de Reidemeister-Schreier
dit que pour chaque repr\'esentant de $B_W/D_I$ dans l'ensemble
choisi
et pour chaque relation de tresses entre deux \'el\'ements de $\bS$ on obtient une
relation entre les g\'en\'erateurs de $D_I$ par le proc\'ed\'e de r\'e\'ecriture
et que les relations obtenues forment un
ensemble complet de relations. 

Autrement dit pour chaque repr\'esentant
$\bb$, c'est-\`a-dire chaque
\'el\'ement $\bI$-r\'eduit de $\Bred$, et chaque paire
$(\bs,\bt)$, telle que $m_{s,t}$ soit fini, d'\'el\'ements de
$\bS$, nous devons r\'e\'ecrire la relation
$$\bb\underbrace{\bs\bt\ldots}_{m_{s,t}}=
\bb\underbrace{\bt\bs\ldots}_{m_{s,t}}.\eqno (*)$$

Rappelons le proc\'ed\'e de r\'e\'ecriture:
\`a chaque \'el\'ement $\bI$-r\'eduit 
$\bb\in\Bred$ et chaque \'el\'ement $\bs\in\bS$ est associ\'e un g\'en\'erateur
$\bb\bs[\bb\bs]\inv$ de $D_I$.
Le proc\'ed\'e de r\'e\'ecriture
consiste \`a remplacer le produit $\bb\bs$ par
$(\bb\bs[\bb\bs]\inv)[\bb\bs]$,
ce qui permet de proche en
proche d'\'ecrire tout \'el\'ement de $B_W$ comme produit
de g\'en\'erateurs de $D_I$ et d'un repr\'esentant de $B_W/D_I$.
 
En appliquant cette r\'e\'ecriture aux deux membres de $(*)$ on obtient une
relation entre les g\'en\'erateurs de $D_I$ et quand $\bb$, $\bs$ et $\bt$
varient on obtient un ensemble complet de relations.

Pour faire le calcul nous utilisons la notation suivante:
\begin{notation} \label{wst} Un produit de
$i\leq m_{s,t}$
facteurs alternativement $\bs$ et $\bt$ sera not\'e
$\wst i$.
\end{notation}

Pour all\'eger les notations nous
\'ecrirons $m$ pour $m_{s,t}$.
Quitte \`a \'echanger $\bs$ et $\bt$,
on peut \'ecrire $\bb=\bb_0.\wst i$ avec
$0\leq i\leq m$ et $i$ maximal, c'est-\`a-dire que $\bb_0$ est
r\'eduit-$\{\bs,\bt\}$.
Pour $0\leq j<m$,
nous \'ecrirons $a_{\bs,\bt}^{(j)}$
\`a la place de $a_{\bb_0\wst j,\br}$ o\`u
$\br$ vaut $\bs$ si $j$ est pair et $\bt$ sinon
(c'est-\`a-dire, sauf dans le cas $j=m-1$, que $\br$ est celui des deux
g\'en\'erateurs $\bs$ ou $\bt$ qui divise \`a droite $\wst {j+1}$
dans le mono\"\i de $\BW$).

Nous allons subdiviser le calcul en plusieurs cas suivant la valeur de $i$ et
les propri\'et\'es de $\bb_0$.

Cas 1): Cas o\`u $i=0$.

Cas 2):
On a $i\geq 1$ et
$\bb_0\bt$ est $\bI$-r\'eduit
(par hypoth\`ese $\bb_0\bs$ est $\bI$-r\'eduit, puisque $i\geq 1$).

Cas 3):
On a $i\geq 1$ et
$\bb_0\bt=\bs'\bb_0$ avec $\bs'\in\bI$ (et $\bb_0\bs$ est $\bI$-r\'eduit).

Pour appliquer le proc\'ed\'e  de r\'e\'ecriture
dans les trois cas ci-dessus, nous aurons
besoin des deux lemmes suivants dont le premier est bien connu.
Dans ces deux lemmes on consid\`ere deux \'el\'ements $\bs$ et $\bt$ de $\bS$ tels
que $m_{s,t}$ est fini.
\begin{lemme}\label{b s'ajoute a s et t}
Si $\bb\in\Bred$ est tel que $\bb\bs$ et $\bb\bt$ sont dans $\Bred$
alors $\bb\bw_{\{\bs,\bt\}}\in\Bred$.
\end{lemme}
\begin{lemme}\label{b w_s,t I-reduit}
Si $\bb\in\Bred$ est un \'el\'ement
r\'eduit-$\{\bs,\bt\}$ et si $\bb\bs$ et $\bb\bt$ sont $\bI$-r\'eduits
alors $\bb\bw_{\{\bs,\bt\}}$ est $\bI$-r\'eduit.
\end{lemme}
\begin{proof}
Si $\bb\bw_{\{\bs,\bt\}}$ n'est pas $\bI$-r\'eduit, notons $i$ le plus petit
indice tel que $\bb.\wst i$ ne soit pas $\bI$-r\'eduit. On a $i>1$ et
il existe un \'el\'ement
$\bs'\in\bI$ tel que $\bb.\wst i=\bs'\bb.\wst {i-1}$. Cette \'egalit\'e
implique que $\bb.\wst i$ est divisible \`a droite \`a la fois par $\bs$ et $\bt$
(car $i-1>0$), donc est divisible par $\bw_{\{\bs,\bt\}}$, ce qui implique
$i=m_{s,t}$ car $\bb$ est r\'eduit-$\{\bs,\bt\}$. On peut alors simplifier par 
$\wst {i-1}$ et l'on obtient que $\bs'\bb$ est \'egal \`a $\bb\bs$ ou
$\bb\bt$ (selon la parit\'e de $i$), ce qui est contraire \`a l'hypoth\`ese.
\end{proof}

Cas 1):
Si $i=0$, si $\bb_0\bs$ et $\bb_0\bt$ sont tous deux $\bI$-r\'eduits, la relation
obtenue est triviale $1=1$.
Si $\bb_0\bs=\bs'\bb_0$ avec $\bs'\in\bI$ et si $\bb_0\bt$
est $\bI$-r\'eduit on obtient la relation triviale $\bs'=\bs'$.
Si $\bb_0\bs=\bs'\bb_0$ et $\bb_0\bt=\bt'\bb_0$ avec $\bs'$ et $\bt'$ dans
$\bI$, on obtient la relation de tresses entre $\bs'$ et $\bt'$ (remarquer que
$m_{\bs,\bt}=m_{\bs',\bt'}$ dans ce cas). Ce dernier cas nous donne toutes les
relations de tresses entre \'el\'ements de $\bI$ car il contient en
particulier le cas o\`u $\bb_0=1$.

Cas 2):
Si $\bb_0\bt$ est $\bI$-r\'eduit (et $i\geq 1$).
Dans un des deux membres de $(*)$, le produit de $\bb_0\wst i$ par le
premier facteur $\bs$ ou $\bt$ est dans $\Bred$.
Le proc\'ed\'e de r\'e\'ecriture est alors trivial jusqu'\`a 
arriver au produit $\bb_0\wmst$. La r\'e\'ecriture du produit suivant qui
consiste \`a multiplier $\bb_0\wmst$ par $\bs$ ou $\bt$ suivant la parit\'e donne
$a_{\bt,\bs}^{(m-1)}\bb_0\wts {m-1}$.
On continue de proche en proche
pour obtenir le premier membre sous la forme
$$a_{\bt,\bs}^{(m-1)}a_{\bt,\bs}^{(m-2)}\ldots
a_{\bt,\bs}^{(m-i)}\bb_0\wts{m-i}.$$
Dans l'autre membre le m\^eme type de calcul donne
$$a_{\bs,\bt}^{(i-1)}a_{\bs,\bt}^{(i-2)}\ldots
a_{\bs,\bt}^{(0)}\bb_0\wts {m-i}.$$
La relation obtenue est donc
$$a_{\bt,\bs}^{(m-1)}
a_{\bt,\bs}^{(m-2)}\ldots a_{\bt,\bs}^{(m-i)}=
a_{\bs,\bt}^{(i-1)}a_{\bs,\bt}^{(i-2)}\ldots
a_{\bs,\bt}^{(0)}.$$

Cas 3):
Dans un des deux membres,
comme dans le cas pr\'ec\'edent, le proc\'ed\'e de r\'e\'ecriture 
est trivial jusqu'\`a 
arriver au produit $\bb_0\wmst=\bs'\bb_0\wst {m-1}$.
La r\'e\'ecriture des produits suivants se passe comme dans le cas pr\'ec\'edent
et on obtient
$$\bs'a_{\bs,\bt}^{(m-2)}a_{\bs,\bt}^{(m-3)}\ldots
a_{\bs,\bt}^{(m-i-1)}\bb_0\wst{m-i-1}.$$
Dans l'autre membre la r\'e\'ecriture des premiers
produits se passe comme dans le premier cas,
jusqu'\`a arriver \`a r\'e\'ecrire le
produit $\bb_0\bt$ qui donne $\bs'\bb_0$. Les r\'e\'ecritures suivantes sont
triviales et on trouve
$$a_{\bs,\bt}^{(i-1)}a_{\bs,\bt}^{(i-2)}\ldots
a_{\bs,\bt}^{(0)}\bs'\bb_0\wst {m-i-1}.$$
La relation obtenue est donc
$$\bs'a_{\bs,\bt}^{(m-2)}a_{\bs,\bt}^{(m-3)}\ldots
a_{\bs,\bt}^{(m-i-1)}=
a_{\bs,\bt}^{(i-1)}a_{\bs,\bt}^{(i-2)}\ldots
a_{\bs,\bt}^{(0)}\bs'.$$

Pour pouvoir r\'esumer les consid\'erations pr\'ec\'edentes dans un \'enonc\'e nous
aurons besoin de faire varier $\bb_0$ dans ce qui pr\'ec\`ede: nous noterons
$a_{\bb_0,\bs,\bt}^{(i)}$ au lieu de $a_{\bs,\bt}^{(i)}$, c'est-\`a-dire 
$a_{\bb_0,\bs,\bt}^{(i)}= a_{\bb_0\underbrace{\scriptstyle\bs\bt\ldots}_i,\br}$,
avec la notation \ref{a_v,s}, o\`u $\br$ vaut $\bs$ ou $\bt$ suivant que $i$ est
pair ou impair.
\begin{proposition} \label{presentation de D_I}
Le groupe $D_I$ admet une pr\'esentation o\`u les g\'en\'erateurs sont les
\'el\'ements de $\bI$ et les
$a_{\bb,\bs}$, o\`u $\bs\in\bS$ et $\bb\bs\in\Bred$ est $\bI$-r\'eduit,
et o\`u les relations sont 
les relations de tresses entre \'el\'ements de $\bI$ et les relations
\begin{align*}
a_{\bb_0,\bs,\bt}^{(m-1)}a_{\bb_0,\bs,\bt}^{(m-2)}
\ldots a_{\bb_0,\bs,\bt}^{(m-i)}&=
a_{\bb_0,\bt,\bs}^{(i-1)}a_{\bb_0,\bt,\bs}^{(i-2)}\ldots
a_{\bb_0,\bt,\bs}^{(0)}\text{ pour }i=1,\ldots,m& (1)\\
\bs'a_{\bb_0,\bs,\bt}^{(m-2)}a_{\bb_0,\bs,\bt}^{(m-3)}\ldots
a_{\bb_0,\bs,\bt}^{(m-i-1)}&=
a_{\bb_0,\bs,\bt}^{(i-1)}a_{\bb_0,\bs,\bt}^{(i-2)}\ldots
a_{\bb_0,\bs,\bt}^{(0)}\bs'
\text{ pour } i=1,\ldots, m-1,& (2)
\end{align*}
o\`u $(\bs,\bt)$ est un couple arbitraire d'\'el\'ements de $\bS$ tel que
$m=m_{s,t}$ est fini,
o\`u dans $(1)$ $\bb_0\in\Bred$ d\'ecrit l'ensemble des \'el\'ements
r\'eduits-$\{\bs,\bt\}$ tels que $\bb_0\bs$ et $\bb_0\bt$ soient $\bI$-r\'eduits
et dans $(2)$ $\bb_0\in\Bred$ d\'ecrit l'ensemble des \'el\'ements
r\'eduits-$\{\bs,\bt\}$ tels que $\bb_0\bs$
soit $\bI$-r\'eduit et que $\bb_0\bt=\bs'\bb_0$ avec $\bs'\in\bI$.
\end{proposition}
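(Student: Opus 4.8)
The plan is to apply the Reidemeister--Schreier method to $D_I\le B_W$, exactly as the discussion preceding the statement prepares. I would take as transversal for the right cosets $D_I\backslash B_W$ the set of $\bI$-reduced elements of $\Bred$, each represented by a reduced word. The first thing to check is that this is a \emph{Schreier} transversal, i.e. prefix-closed: a left divisor of an element of $\Bred$ is again in $\Bred$, and a left divisor of an $\bI$-reduced element is again $\bI$-reduced, for if some $\bs'\in\bI$ shortened a prefix it would already shorten the whole element. Independence of the lift on the chosen reduced expression then makes $\bb\mapsto[\bb]$ well defined. With this transversal, the generator computation carried out above shows that the generators $\bw\bs[\bw\bs]\inv$ collapse to exactly the elements of $\bI$ together with the $a_{\bb,\bs}=\bb\bs^2\bb\inv$ with $\bs\in\bS$ and $\bb\bs\in\Bred$ $\bI$-reduced; this is the claimed generating set.

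For the relations I would invoke the completeness clause of Reidemeister--Schreier: rewriting, for every transversal element $\bb$ and every defining relator of $B_W$, yields a complete set of relations of $D_I$. The defining relators are the braid relations $(*)$ attached to pairs $(\bs,\bt)$ with $m=m_{s,t}<\infty$. The organizing device is the normal form $\bb=\bb_0\,\wst i$ with $i$ maximal and $\bb_0$ reduced-$\{\bs,\bt\}$, obtained after possibly exchanging $\bs$ and $\bt$; this decomposition is unique, so the pairs (transversal element, relator) correspond bijectively to the data $(\{\bs,\bt\},\bb_0,i)$. For each such datum I would run the rewriting procedure on both sides of $(*)$, exactly as in Cases $1$, $2$ and $3$ above, and read off the associated relation.

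The core of the proof is the verification that every datum lands in one of the three cases and that these yield precisely the stated families. For $i\ge1$ the prefix $\bb_0\bs$ of $\bb$ is $\bI$-reduced, so only the status of $\bb_0\bt\in\Bred$ remains: either $\bb_0\bt$ is $\bI$-reduced, which is Case $2$ and produces relation $(1)$, or it is not, and then, taking the $\bs'\in\bI$ witnessing non-reducedness, the exchange Lemma~\ref{echange} applied to $\bb_0,\bs',\bt$ (using $\bs'\bb_0\in\Bred$, since $\bb_0$ as a prefix of $\bb$ is $\bI$-reduced, together with $\bb_0\bt\in\Bred$ and $\bs'\bb_0\bt\notin\Bred$) forces $\bb_0\bt=\bs'\bb_0$, which is Case $3$ and produces relation $(2)$. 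The remaining value $i=0$ is Case $1$: its subcases give only trivial relations or braid relations between elements of $\bI$, the subcase $\bb_0=1$ supplying all of the latter. Reinstating the dependence on $\bb_0$ in the notation $a^{(j)}_{\bb_0,\bs,\bt}$, the relations read off in Cases $2$ and $3$ are exactly $(1)$ for $i=1,\dots,m$ and $(2)$ for $i=1,\dots,m-1$, with $\bb_0$ ranging over the two sets described in the statement; note that for a datum in Case $3$ the element $\bb_0\bw_{\{\bs,\bt\}}=\bs'\,\bb_0\,\wst{m-1}$ fails to be $\bI$-reduced, which is what excludes $i=m$ there.

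I expect the main obstacle to be this exhaustiveness-and-bookkeeping step rather than any single hard estimate. One must be certain that the exchange-lemma dichotomy really leaves no further possibility, so that Case $3$ is forced and need not split further; that under the chosen orientation of $(\bs,\bt)$ no relator is double-counted or omitted; and that the alternating rewriting selects the correct generator $a^{(j)}$ at each step so that the index ranges emerge as stated. Granting the generator and relation computations established above together with Reidemeister--Schreier, assembling these observations completes the proof.
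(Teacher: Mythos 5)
Your proposal is correct and follows essentially the same route as the paper: the paper's own proof of this proposition is exactly the Reidemeister--Schreier discussion preceding the statement (the transversal of $\bI$-reduced elements of $\Bred$, the collapse of the generators to $\bI$ and the $a_{\bb,\bs}$, and the rewriting of the braid relators organized by the decomposition $\bb=\bb_0\wst{i}$ into Cases 1, 2 and 3). The points you make explicit --- prefix-closure of the transversal, forcing Case 3 by the exchange Lemma~\ref{echange} applied to $(\bs',\bb_0,\bt)$, and excluding $i=m$ there because $\bb_0\bw_{\{\bs,\bt\}}=\bs'\bb_0\wst{m-1}$ fails to be $\bI$-reduced --- are precisely the steps the paper leaves implicit, and you argue them correctly.
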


Donnons le cas particulier du groupe de tresses pur:
\begin{corollaire}\label{presentation du groupe pur}
Le groupe $P_W$ a une pr\'esentation o\`u les g\'en\'erateurs sont les
$a_{\bb,\bs}$ o\`u $\bs\in\bS$ et $\bb\bs\in\Bred$ et o\`u les relations sont,
avec les notations de la proposition,
$$a_{\bb_0,\bs,\bt}^{(m-1)}
a_{\bb_0,\bs,\bt}^{(m-2)}\ldots a_{\bb_0,\bs,\bt}^{(m-i)}=
a_{\bb_0,\bt,\bs}^{(i-1)}a_{\bb_0,\bt,\bs}^{(i-2)}\ldots
a_{\bb_0,\bt,\bs}^{(0)},$$
pour chaque couple $(\bs,\bt)$ d'\'el\'ements de $\bS$ tel que $m_{s,t}$ est fini,
pour chaque $\bb_0\in\Bred$
r\'eduit-$\{\bs,\bt\}$ et chaque $i$ variant de 1 \`a $m-1$, o\`u l'on a pos\'e
$m=m_{s,t}$.
\end{corollaire}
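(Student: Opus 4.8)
The plan is to obtain the statement as the special case $I=\emptyset$ of Proposition~\ref{presentation de D_I}. When $I=\emptyset$ one has $\bI=\emptyset$, so there are no generators coming from $\bI$ and no braid relations among elements of $\bI$; moreover every $\bb\bs\in\Bred$ is automatically $\bI$-r\'eduit, so the generators are precisely the $a_{\bb,\bs}$ with $\bb\bs\in\Bred$, as in the corollary. The family of relations~$(2)$ disappears entirely, as it requires an element $\bs'\in\bI$ and $\bI$ is empty. Thus only the relations of type~$(1)$ survive, and it remains to match them against those listed in the corollary.

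First I would record that, for $I=\emptyset$, relation~$(1)$ is produced by Case~$2$ of the computation preceding Proposition~\ref{presentation de D_I} for every $\bb_0\in\Bred$ r\'eduit-$\{\bs,\bt\}$ and every $i=1,\dots,m$, with no further restriction on $\bb_0$. This is one index more than the range $i=1,\dots,m-1$ asked for; the point is therefore to show that the extremal relation $i=m$,
$$a_{\bb_0,\bs,\bt}^{(m-1)}\cdots a_{\bb_0,\bs,\bt}^{(0)}=a_{\bb_0,\bt,\bs}^{(m-1)}\cdots a_{\bb_0,\bt,\bs}^{(0)},$$
follows from the relations with $i<m$ taken for both orderings of the pair (the corollary ranges over every couple $(\bs,\bt)$, hence records relation~$(1)$ for $(\bs,\bt)$ and for $(\bt,\bs)$ alike). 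One checks immediately that the two sides of the $i=m$ relation for $(\bs,\bt)$ coincide with those for $(\bt,\bs)$, so there is a single extremal relation to dispose of.

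The mechanism I would use is telescoping. Writing $L_i=a_{\bb_0,\bs,\bt}^{(m-1)}\cdots a_{\bb_0,\bs,\bt}^{(m-i)}$, comparison of relation~$(1)$ at levels $i$ and $i+1$ yields the conjugation formula $a_{\bb_0,\bt,\bs}^{(i)}=L_i\,a_{\bb_0,\bs,\bt}^{(m-i-1)}\,L_i\inv$ for $i=0,\dots,m-2$, expressing every generator of the $(\bt,\bs)$-family except the last through the $(\bs,\bt)$-family; the swapped relations do the symmetric job. In particular relation~$(1)$ at $i=1$ for each ordering gives the identifications $a_{\bb_0,\bt,\bs}^{(0)}=a_{\bb_0,\bs,\bt}^{(m-1)}$ and $a_{\bb_0,\bt,\bs}^{(m-1)}=a_{\bb_0,\bs,\bt}^{(0)}$. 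Feeding these, together with the level-$(m-1)$ relation $L_{m-1}=a_{\bb_0,\bt,\bs}^{(m-2)}\cdots a_{\bb_0,\bt,\bs}^{(0)}$, into the extremal relation collapses it to the single commutation
$$a_{\bb_0,\bs,\bt}^{(0)}\,L_{m-1}=L_{m-1}\,a_{\bb_0,\bs,\bt}^{(0)}.$$

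This commutation is exactly the main obstacle. Since $L_{m-1}\,a_{\bb_0,\bs,\bt}^{(0)}$ is the ordered product of all reflection-squares attached to the dihedral pair, namely $\bb_0\wmst^2\bb_0\inv$, the required identity is the assertion that this element commutes with the individual generator $a_{\bb_0,\bs,\bt}^{(0)}=\bb_0\bs^2\bb_0\inv$ — the statement that the square of the longest element of the dihedral group is central. The delicate step is to produce this commutation \emph{formally} from the relations with $i<m$ alone. I would attempt it by running the two systems of conjugation formulas against each other: each rewrites $L_{m-1}$ as a product which, after substitution, must be carried across $a_{\bb_0,\bs,\bt}^{(0)}$ using the lower-level identities, and the sought commutation should appear as the consistency condition of the two rewritings. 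Verifying that this consistency is genuinely forced by the listed relations, rather than needing the extra relation $i=m$, is where the real content lies and the step I expect to be hardest to settle cleanly.
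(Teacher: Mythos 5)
Your overall route is the paper's: the proof in the paper is the single sentence ``il suffit d'appliquer ce qui pr\'ec\`ede dans le cas o\`u $I$ est vide'', and your first paragraph (no generators or braid relations from $\bI$, the type~$(2)$ relations vacuous, every $\bb\bs\in\Bred$ automatically $\bI$-r\'eduit, type~$(1)$ surviving for all $\bb_0$ r\'eduit-$\{\bs,\bt\}$ and $i=1,\ldots,m$) is exactly that specialization, carried out correctly. Your formal manipulations are also correct: the conjugation formulas $a_{\bb_0,\bt,\bs}^{(i)}=L_i\,a_{\bb_0,\bs,\bt}^{(m-i-1)}\,L_i\inv$ and the identifications at $i=1$ do collapse the extremal relation $i=m$ to the commutation of $a_{\bb_0,\bs,\bt}^{(0)}$ with $L_{m-1}$, i.e.\ to the centrality of the full twist $\bb_0\bw_{\{\bs,\bt\}}^2\bb_0\inv$.

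But the step you flag as ``hardest to settle cleanly'' is not merely hard: it is false, so your plan cannot be completed. The $i=m$ relation is independent of the relations with $i<m$, even taken over both orderings, all $\bb_0$ and all pairs. Test $W$ of type $A_1\times A_1$, so $m=2$ and $\bb_0=1$ is the only r\'eduit-$\{\bs,\bt\}$ element: the corollary's generators are $a_{1,\bs}$, $a_{1,\bt}$, $a_{\bs,\bt}$, $a_{\bt,\bs}$, and the complete list of relations with $i\le m-1=1$ consists of the two identifications $a_{\bs,\bt}=a_{1,\bt}$ and $a_{\bt,\bs}=a_{1,\bs}$; the presented group is then free of rank $2$, whereas $P_W\simeq 2\bbZ\times 2\bbZ$ is abelian. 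The missing relation is precisely your commutation $a_{1,\bs}a_{1,\bt}=a_{1,\bt}a_{1,\bs}$, the image of the central element $(\bs\bt)^2$: it must be imposed, not derived. The independence persists for higher $m$; for $m=3$, eliminating the $(\bt,\bs)$-generators from the $i\le 2$ relations leaves only $u_0u_2u_1=u_1u_0u_2$ (in your notation $u_j=a_{\bb_0,\bs,\bt}^{(j)}$), which does not imply the extremal $u_2u_1u_0=u_1u_0u_2$, as one sees by sending $u_1\mapsto 1$ and $u_0,u_2$ to two non-commuting elements. The conclusion to draw is that the bound $m-1$ in the corollary's statement is a misprint for $m$: the paper's one-line proof yields, and the truth of the statement requires, the relations $(1)$ for $i=1,\ldots,m$ exactly as in Proposition~\ref{presentation de D_I}. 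With the range corrected, your first paragraph alone, with the redundancy argument deleted, is a complete proof identical to the paper's.
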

\begin{proof} Il suffit d'appliquer ce qui pr\'ec\`ede dans le cas o\`u $I$
est vide.
\end{proof}
\begin{corollaire} \label{Produit semi-direct}
\begin{enumerate}
\item Le groupe $B_\bI$ est isomorphe au groupe de tresses $B_{W_I}$.
\item
Si $U_\bI$ est le sous-groupe normal de $D_I$ engendr\'e par les
\'el\'ements $a_{\bb,\bs}$ comme dans \ref{presentation de D_I} alors
$D_I=U_\bI\rtimes B_\bI$.
\end{enumerate}
\end{corollaire}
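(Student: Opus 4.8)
Le plan est de tout tirer de la présentation de $D_I$ obtenue en \ref{presentation de D_I}. Posons $G=D_I$, notons $N=U_\bI$ le sous-groupe normal engendré par les $a_{\bb,\bs}$ et $H=B_\bI$ le sous-groupe engendré par $\bI$. Pour (ii) je vérifierais les trois points caractérisant un produit semi-direct: $N$ normal (vrai par construction), $G=N\cdot H$ et $N\cap H=\{1\}$; l'assertion (i) en découlera comme sous-produit. L'égalité $G=N\cdot H$ est immédiate: d'après \ref{presentation de D_I}, $G$ est engendré par $\bI$ et les $a_{\bb,\bs}$, donc par $H$ et $N$, et comme $N$ est normal, $N\cdot H$ est un sous-groupe contenant tous les générateurs.

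Le c\oe ur de l'argument sera l'identification du quotient $G/N$. Je quotienterais la présentation de \ref{presentation de D_I} en imposant $a_{\bb,\bs}=1$: les relations $(1)$ deviennent $1=1$ et les relations $(2)$ deviennent $\bs'=\bs'$, de sorte qu'il ne subsiste que les relations de tresses entre éléments de $\bI$. On obtient donc $G/N\simeq B_{W_I}$, la classe de $\bs$ correspondant au générateur standard (via une transformation de Tietze éliminant les générateurs $a$). C'est ici qu'intervient de manière essentielle la complétude de la présentation \ref{presentation de D_I}: sans elle on ne pourrait affirmer qu'aucune relation supplémentaire entre les seuls éléments de $\bI$ ne persiste.

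Enfin je déduirais simultanément (i) et la trivialité de $N\cap H$ par un va-et-vient entre $B_{W_I}$ et $H$. Comme les $\bs$ (pour $s\in I$) satisfont les relations de tresses dans $B_W$, on dispose du morphisme canonique surjectif $\phi:B_{W_I}\to H=B_\bI$ envoyant les générateurs sur les $\bs$. En notant $q:G\to G/N\simeq B_{W_I}$ la projection, le composé $q\circ\phi$ fixe chaque générateur, donc vaut l'identité de $B_{W_I}$; par suite $\phi$ est injectif, donc bijectif, ce qui prouve (i). Il en résulte que $q|_H=\phi\inv$ est injectif, d'où $N\cap H=\ker(q|_H)=\{1\}$, et finalement $D_I=U_\bI\rtimes B_\bI$.

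L'étape vraiment délicate n'est pas calculatoire mais conceptuelle: c'est de constater que la présentation de $D_I$ se scinde proprement modulo $N$, puis d'utiliser le relèvement $\phi$ composé avec $q$ pour récupérer d'un seul coup l'injectivité du plongement parabolique (assertion (i)) et la transversalité $N\cap H=\{1\}$ nécessaire au produit semi-direct.
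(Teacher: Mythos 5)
Votre preuve est correcte et suit essentiellement la même démarche que l'article : votre projection $q:D_I\to D_I/U_\bI\simeq B_{W_I}$ (obtenue en tuant les $a_{\bb,\bs}$ dans la présentation \ref{presentation de D_I}) est exactement l'homomorphisme $h$ du texte, votre $\phi$ est son $j$, et l'argument $q\circ\phi=\Id$ donnant simultanément (i) et $U_\bI\cap B_\bI=\{1\}$ est celui de l'article.
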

\begin{proof}
On peut d\'efinir un homomorphisme $h:D_I\to B_{W_I}$
qui envoie les \'el\'ements de $\bI$ sur les g\'en\'erateurs correspondant de
$B_{W_I}$ et $a_{\bb,\bs}$ sur 1 car c'est compatible avec les relations
dans $D_I$. L'homomorphisme $h$ est \'evidemment surjectif et son noyau
contient $U_\bI$. D'autre part il existe aussi un homomorphisme
$j:B_{W_I}\to B_\bI$ qui envoie les g\'en\'erateurs de $B_{W_I}$ sur les
\'el\'ements de $\bI$. On a $h\circ j=\Id$ et comme $j$ est clairement
surjectif on obtient (i).
Ceci montre aussi que $B_\bI$ ne rencontre pas le noyau de $h$. Comme de
plus $B_\bI$ et $U_\bI$ engendrent $D_I$ et que $U_\bI$ est normal et
inclus dans le noyau de $h$ on a (ii).
\end{proof}
\begin{remarque} Si $W$ est fini l'ensemble des r\'eflexions images
des \'el\'ements $\bw\bs\tilde\bw$ o\`u $\bw\bs$ est $\bI$-r\'eduit
est exactement l'ensemble des r\'eflexions qui ne
sont pas dans $W_I$ c'est-\`a-dire $\overline N(w_Iw_S)$. En effet
si $ws$ est $I$-r\'eduit, il existe une \'ecriture
r\'eduite de $w_Iw_S$ qui commence par $ws$, donc la r\'eflexion $wsw\inv$
est bien dans $\overline N(w_Iw_S)$. R\'eciproquement toute \'el\'ement de
$\overline N(w_Iw_S)$ s'\'ecrit $wsw\inv$ o\`u $w$ est r\'eduit-$s$ et o\`u $ws$ est le
d\'ebut d'une \'ecriture r\'eduite de $w_Iw_S$; en particulier $ws$ est $I$-r\'eduit.
Ce r\'esultat n'est pas vrai pour un groupe de Coxeter quelconque comme le montre
l'exemple suivant dans $\tilde A_2$, o\`u on note $S=\{r,s,t\}$. On prend
$I=\{r,s\}$. La r\'eflexion $srtrs$ n'est pas dans $W_I$, mais elle ne s'\'ecrit
pas $ws'w\inv$ avec pour $ws'$ un \'el\'ement $I$-r\'eduit.
En effet dans ce cas on
aurait $rsws'=trsw$ avec des longueurs qui s'ajoutent, ce qui implique que cet
\'el\'ement est divisible par $rs$ et par $t$. Ceci est impossible car $\overline
N(w)$ contiendrait alors $rsr$ et $t$ et donc
aussi toutes les r\'eflexions du groupe
di\'edral infini engendr\'e par ces deux \'el\'ements.
\end{remarque}
\begin{corollaire} \label{devissage} Consid\'erons une suite de parties de
$\bS$ embo{\^\i}t\'ees $\emptyset=\bI_0\subsetneq
\bI_1\subsetneq \bI_2\ldots\subsetneq \bI_n=\bS$ et notons $U_j$ le groupe
$U_\bI$ de \ref{Produit semi-direct}  quand le groupe ambient est $B_{\bI_j}$
et que $\bI=\bI_{j-1}$; on a 
$P_W=U_n\rtimes(U_{n-1}\rtimes(\cdots\rtimes(U_2\rtimes
U_1)\ldots))$.
\end{corollaire}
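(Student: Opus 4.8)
Le plan est de ramener l'\'enonc\'e \`a une application r\'ep\'et\'ee de \ref{Produit semi-direct} le long de la cha\^\i ne, combin\'ee \`a une r\'ecurrence descendante sur $j$. Pour $0\le j\le n$, je noterais $P_j$ le noyau du morphisme canonique $p:B_{\bI_j}\to W_{I_j}$, c'est-\`a-dire le groupe de tresses pur de $B_{\bI_j}$; ainsi $P_0$ est trivial (car $\bI_0=\emptyset$) et $P_n=P_W$. Je montrerais par r\'ecurrence descendante l'\'egalit\'e $P_j=U_j\rtimes P_{j-1}$ pour tout $j$, ce qui, une fois it\'er\'ee de $j=n$ \`a $j=1$, donne imm\'ediatement la d\'ecomposition annonc\'ee puisque $P_1=U_1$.

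Pour $j$ fix\'e, j'appliquerais \ref{Produit semi-direct} en prenant pour groupe ambiant $B_{\bI_j}$ et pour $\bI$ la partie $\bI_{j-1}$. En notant $D_{j-1}^{(j)}$ l'image r\'eciproque de $W_{I_{j-1}}$ dans $B_{\bI_j}$, on obtient $D_{j-1}^{(j)}=U_j\rtimes B_{\bI_{j-1}}$, o\`u $U_j$ est exactement le sous-groupe figurant dans l'\'enonc\'e du corollaire. Il faut alors rep\'erer la place de $P_j$ dans ce produit semi-direct. Comme $p(P_j)=\{1\}\subset W_{I_{j-1}}$, on a $P_j\subset D_{j-1}^{(j)}$, et de fait $P_j$ est pr\'ecis\'ement le noyau de la restriction de $p$ \`a $D_{j-1}^{(j)}$.

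Le c\oe ur de l'argument consiste \`a observer que cette restriction tue $U_j$: les g\'en\'erateurs $a_{\bb,\bs}=\bb\bs^2\bb\inv$ de $U_j$ sont purs, donc $U_j\subset P_j\subset\ker p$. Par cons\'equent $p$ se factorise \`a travers la projection $D_{j-1}^{(j)}\to B_{\bI_{j-1}}$ de noyau $U_j$, et son noyau vaut $U_j\rtimes\ker\bigl(p:B_{\bI_{j-1}}\to W_{I_{j-1}}\bigr)=U_j\rtimes P_{j-1}$. Concr\`etement, pour $x\in P_j$ \'ecrit $x=ub$ avec $u\in U_j$ et $b\in B_{\bI_{j-1}}$, on a $p(x)=p(b)$, donc $b\in P_{j-1}$; de plus $U_j\cap P_{j-1}\subset U_j\cap B_{\bI_{j-1}}=\{1\}$, et $U_j$ reste normal dans $P_j$ puisqu'il l'est dans $D_{j-1}^{(j)}\supset P_j$. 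D'o\`u $P_j=U_j\rtimes P_{j-1}$.

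Il reste \`a it\'erer. Le point \`a v\'erifier, et sans doute la seule subtilit\'e, est que le groupe $U_j$ ne d\'epend que du couple $\bI_{j-1}\subset\bI_j$ et non de la cha\^\i ne tout enti\`ere: c'est clair sur sa d\'efinition, qui ne fait intervenir que le groupe ambiant $B_{\bI_j}$ et la partie $\bI_{j-1}$. On peut donc relancer la r\'ecurrence \`a l'int\'erieur de $B_{\bI_{n-1}}$ sur la cha\^\i ne tronqu\'ee $\emptyset=\bI_0\subsetneq\cdots\subsetneq\bI_{n-1}$, ce qui fournit $P_{n-1}=U_{n-1}\rtimes(\cdots\rtimes(U_2\rtimes U_1)\ldots)$; combin\'e \`a $P_W=U_n\rtimes P_{n-1}$, cela donne le r\'esultat.
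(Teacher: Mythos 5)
Votre d\'emonstration est correcte et suit essentiellement la m\^eme voie que celle de l'article : on applique \ref{Produit semi-direct} avec $\bI=\bI_{n-1}$ et, comme $U_n\subset P_W$ (ses g\'en\'erateurs $\bb\bs^2\bb\inv$ sont purs), l'intersection de $P_W$ avec la d\'ecomposition $D_I=U_\bI\rtimes B_\bI$ donne $P_W=U_n\rtimes P_{W_{\bI_{n-1}}}$, puis on conclut par r\'ecurrence sur la cha\^\i ne tronqu\'ee. Vous explicitez simplement les v\'erifications (puret\'e de $U_j$, intersection triviale, normalit\'e, ind\'ependance de $U_j$ vis-\`a-vis de la cha\^\i ne) que l'article laisse au lecteur.
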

\begin{proof} Comme $U_n\subset P_W$, le corollaire
\ref{Produit semi-direct} appliqu\'e avec $\bI=\bI_{n-1}$
implique que $P_W=U_n\rtimes P_{W_{\bI_{n-1}}}$.
On obtient le r\'esultat par r\'ecurrence \`a partir de cette \'egalit\'e.
\end{proof}
\section{Application aux groupes de type fini}
Nous allons pr\'eciser la pr\'esentation pr\'ec\'edente quand $W$ est fini en
utilisant \ref{devissage} pour obtenir
une pr\'esentation meilleure que dans
\ref{presentation du groupe pur} du groupe de tresses pur.
Nous \'etudions d'abord la question de savoir si $U_\bI$ est engendr\'e par les
$a_{\bb,\bs}$ (sans qu'il soit besoin de prendre la cl\^oture normale).
Il en est ainsi si dans les relations de type $(2)$ de
\ref{presentation de D_I} on peut obtenir toutes les conjugaisons des \'el\'ements
$a_{\bb,\bs}$ par les \'el\'ements $\bs'\in\bI$.
Le lemme suivant, valable sans hypoth\`ese de finitude de $W$,
pr\'ecise quelles conjugaisons on peut obtenir.
\begin{lemme}\label{s'*a_b1s*s' inv} Soit $\bs'\in\bI$ et soient
$\bb\in\Bred$
et $\bs\in\bS$ tels que $\bb\bs$ soit r\'eduit et $\bI$-r\'eduit;
on a une relation du type $(2)$ dans \ref{presentation de D_I}
faisant intervenir $\bs'$ et $a_{\bb,\bs}$ si et seulement si
$\bb\inv\bs'\bb\in B_{\{\bs,\bt\}}$ pour un certain $\bt$ tel que $m_{\bs,\bt}$
soit fini.
\end{lemme}
\begin{proof}
Si on a une relation de type $(2)$, on a vu que $\bb$ s'\'ecrit
$\bb_0\wst i$ o\`u $\bb_0\bs$ est $\bI$-r\'eduit, $\bb_0\bt=\bs'\bb_0$ et
$m_{\bs,\bt}$ est fini.
Alors $\bb\inv\bs'\bb=(\wst i)\inv \bt\wst i\in B_{\{\bs,\bt\}}$.
R\'eciproquement si $\bb\inv\bs'\bb\in B_{\{\bs,\bt\}}$, \'ecrivons $\bb=\bb_0\wst i$ 
avec $\bb_0$ r\'eduit-$\{\bs,\bt\}$. Alors
$\bb_0\inv\bs'\bb_0\in B_{\{\bs,\bt\}}$, \ie, $\bs'\bb_0=\bb_0\bx$ avec
$\bx\in B_{\{\bs,\bt\}}$. Or $\bs'\bb_0$ est dans $\Bred$ car
$\bb$ est $\bI$-r\'eduit. On en d\'eduit, par exemple en prenant les images dans
$W$, que $\bx$ est dans $B_{\{\bs,\bt\}}^+$ et est de
longueur 1. Comme $\bx$ n'est pas \'egal \`a $\bs$
car $\bb\bs$ est $I$-r\'eduit, donc aussi
$\bb_0\bs$, on a $\bx=\bt$ c'est-\`a-dire $\bs'\bb_0=\bb_0\bt$.
La conjugaison de l'\'el\'ement $a_{\bb,\bs}$ 
par $\bs'$ appara{\^\i}t donc bien dans les relations $(2)$ de
\ref{presentation de D_I}.
\end{proof}
Remarquons que dans l'\'enonc\'e pr\'ec\'edent $\bb\inv\bs'\bb$ ne peut valoir ni 1 ni
$\bs$, donc qu'il n'y a pas d'ambigu{\"\i}t\'e sur $\bt$.

Nous allons utiliser le lemme pr\'ec\'edent pour obtenir 
un r\'esultat sur $U_\bI$ quand $W$
est de type $A_n$, $B_n$ ou $I_2(m)$.
Rappelons que quand $W$ est fini, pour tout $\bI$ l'\'el\'ement
$\bb^\bI=\bw_\bI\inv\bw_\bS\in\Bred$
est le plus grand \'el\'ement $\bI$-r\'eduit de $\Bred$
(au sens que tout \'el\'ement $\bI$-r\'eduit en est un diviseur \`a gauche).
\begin{proposition} \label{U_I normal}
Supposons $W$ fini et soit $\bI$ tel que le plus grand \'el\'ement $\bI$-r\'eduit
$\bb^\bI$ de $\Bred$ ait une seule \'ecriture comme produit de g\'en\'erateurs
dans $\BW$. Alors
\begin{enumerate}
\item Il n'y a pas de relation de type \ref{presentation de D_I},
$(1)$ dans $D_I$.
\item
le groupe $U_\bI$ est engendr\'e par
les $a_{\bb,\bs}$.
\end{enumerate}
\end{proposition}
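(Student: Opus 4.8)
The plan is to turn the uniqueness hypothesis into a purely order-theoretic statement and feed it into the presentation already obtained. Recall from just before the statement that every $\bI$-reduced element of $\Bred$ is a left divisor of $\bb^\bI$. Now the left divisors of an element having a \emph{unique} reduced expression in $\BW$ are exactly the prefixes of that expression: if $\bb^\bI=\bb\bc$, then concatenating a reduced word of $\bb$ with one of $\bc$ yields a reduced word of $\bb^\bI$, which by uniqueness must be the given one, so $\bb$ is a prefix. Hence the $\bI$-reduced elements of $\Bred$ form a single chain, totally ordered by left-divisibility. Writing the unique reduced word of $\bb^\bI$ as $\bx_1\cdots\bx_N$, the $\bI$-reduced elements are the prefixes $\bp_j=\bx_1\cdots\bx_j$, and the generators of \ref{presentation de D_I} are precisely the $a_{\bp_{k-1},\bx_k}$ for $1\le k\le N$.

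For (i), suppose a relation of type \ref{presentation de D_I},~$(1)$ occurred: it comes from a pair $(\bs,\bt)$ with $m=m_{s,t}$ finite and an element $\bb_0$ reduced-$\{\bs,\bt\}$ with $\bb_0\bs$ and $\bb_0\bt$ both $\bI$-reduced. By Lemma \ref{b w_s,t I-reduit}, $\bb_0\bw_{\{\bs,\bt\}}$ is then $\bI$-reduced, hence a prefix of the unique reduced word of $\bb^\bI$. But $\bw_{\{\bs,\bt\}}$ carries the two distinct reduced words $\wst m$ and $\wts m$ (each of length $m\ge 2$), which would produce two distinct reduced words of $\bb^\bI$, contradicting the hypothesis. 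So no such $\bb_0$ exists and there is no relation of type $(1)$.

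For (ii), recall from \ref{Produit semi-direct} that $D_I=U_\bI\rtimes B_\bI$, where $U_\bI$ is the normal closure of the $a_{\bb,\bs}$; let $V$ be the subgroup they generate. Since $D_I$ is generated by $V$ together with $\bI$, it suffices to show that each $\bs'\in\bI$ conjugates every generator into $V$, for then $V$ is normal and equals $U_\bI$. (Note that $\bs' a_{\bb,\bs}\bs'^{-1}$ is never itself a generator, as $\bs'\bb\bs$ lies in the coset $W_I\cdot\bb\bs$ and so is not $\bI$-reduced.) By Lemma \ref{s'*a_b1s*s' inv} the conjugation of $a_{\bb,\bs}$ by $\bs'$ enters a relation of type $(2)$ exactly when $\bb^{-1}\bs'\bb\in B_{\{\bs,\bt\}}$ for some $\bt$ with $m_{\bs,\bt}$ finite, in which case $\bb=\bb_0\wst i$. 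Within one block of such relations (fixed $\bb_0,\bs,\bt,\bs'$), writing $c_j$ for the generator attached to $\bb_0\wst j$, the case $i=1$ gives $\bs' c_{m-2}\bs'^{-1}=c_0$, and solving successively in decreasing $j$ expresses every $\bs' c_j\bs'^{-1}$ ($0\le j\le m-2$) as a word in the $a$'s; this reaches $a_{\bb,\bs}=c_i$ provided $i\le m-2$. Thus (ii) reduces to two points, both supplied by the chain structure: that $i\le m-2$ always (otherwise $\bb\bs=\bb_0\bw_{\{\bs,\bt\}}$ would be an $\bI$-reduced prefix with two reduced words), and the confinement $\bb^{-1}\bs'\bb\in B_{\{\bs,\bt\}}$.

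The hard part is this last confinement. For $\bb=\bp_{k-1}$ with $\bp_{k-1}\bx_k$ $\bI$-reduced and $\bs'\in\bI$, the element $\bs'\bp_{k-1}$ is reduced (because $\bp_{k-1}$ is $\bI$-reduced) but not $\bI$-reduced, so it acquires $\bs'$ as a left descent while $\bp_{k-1}$ has none in $\bI$. The plan is to push $\bs'$ rightward through the unique reduced word $\bx_1\cdots\bx_{k-1}$ of $\bp_{k-1}$ by repeated application of the exchange lemma \ref{echange}, combined with \ref{Dyer} to locate the generator at which $\bs'$ is absorbed; uniqueness of this word forbids any branching and should force the successive exchanges to take place inside a single dihedral parabolic $B_{\{\bx_k,\bt\}}$, yielding $\bp_{k-1}=\bb_0\wst i$ with $\bb_0\bt=\bs'\bb_0$ and hence $\bb^{-1}\bs'\bb=(\wst i)^{-1}\bt\,\wst i\in B_{\{\bx_k,\bt\}}$. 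I expect this confinement to be the main obstacle. Finally, the symmetric conjugations $\bs'^{-1}a_{\bb,\bs}\bs'$ follow from the same relations $(2)$ read in the opposite direction, giving $\bs' V\bs'^{-1}=V$ for every $\bs'\in\bI$, whence $V$ is normal and $V=U_\bI$.
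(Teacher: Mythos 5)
Your part (i) is correct and is essentially the paper's own argument (the paper is even slightly more direct: two prefixes $\bb\bs$ and $\bb\bt$ of $\bb^\bI$ would already yield two distinct reduced words, without needing Lemma \ref{b w_s,t I-reduit}). Your reduction of part (ii) is also sound as far as it goes: the chain structure of the $\bI$-reduced elements, the identification of the generators with the prefixes $a_{\bp_{k-1},\bx_k}$, the bound $i\le m-2$, the recursive solving of the relations of type $(2)$ to express $\bs' a_{\bb,\bs}\bs'{}\inv$ and $\bs'{}\inv a_{\bb,\bs}\bs'$ as words in the $a$'s, and the conclusion that normality of the subgroup $V$ generated by the $a_{\bb,\bs}$ forces $V=U_\bI$ --- all of this is correct and parallels the paper. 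But the proof has a genuine gap at exactly the point you flag yourself: the confinement $\bb\inv\bs'\bb\in B_{\{\bs,\bt\}}$, which Lemma \ref{s'*a_b1s*s' inv} requires before a single relation of type $(2)$ becomes available, is never proved. Your plan of ``pushing $\bs'$ rightward through the word by the exchange lemma, with uniqueness forbidding branching'' is a hope, not an argument: the exchange lemma \ref{echange} governs individual exchanges inside $\Bred$, but nothing in it, nor in the uniqueness of the word of $\bb^\bI$, prevents the successive exchanges from passing through several distinct dihedral parabolics before $\bs'$ is absorbed. Since this confinement is the entire content of the paper's proof of (ii), the proposal proves the proposition only for those generators for which the confinement happens to hold.

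The missing idea is where finiteness of $W$ enters beyond the mere existence of $\bb^\bI$ (your argument uses finiteness only through that existence, which is a warning sign). The paper exploits $\bb^\bI=\bw_\bI\inv\bw_\bS$ and the fact that the longest elements $w_I$ and $w_S$ conjugate $I$ and $S$ onto themselves: this gives $\bs'\bb^\bI=\bb^\bI\br$ in $\Bred$ for some $\br\in\bS$. Consequently $\bs'\bb^\bI$ is left-divisible by both $\bs'$ and the unique first letter $\bs_1$ of $\bb^\bI$, hence by $\bw_{\{\bs',\bs_1\}}$; cancelling that block and iterating produces the canonical factorisation $\bb^\bI=\prod_{i=1}^{k}\bw_{\bs_i,\bs'_i}^{(m_{\bs_i,\bs'_i}-1)}$ into maximal dihedral blocks, with $\bs'_1=\bs'$ and $\bs'_i\cdot(\text{block }i)=(\text{block }i)\cdot\bs'_{i+1}$, and this concatenation is the unique reduced word of $\bb^\bI$. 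The confinement is then immediate for every generator: a prefix $\bb\bs$ ends inside some block $h$, so $\bs'\bb=(\text{blocks }1,\dots,h-1)\cdot\bs'_h\bw_{\bs_h,\bs'_h}^{(j-1)}$ and $\bb\inv\bs'\bb=(\bw_{\bs_h,\bs'_h}^{(j-1)})\inv\bs'_h\bw_{\bs_h,\bs'_h}^{(j-1)}\in B_{\{\bs_h,\bs'_h\}}$, with $\bs\in\{\bs_h,\bs'_h\}$ and $m_{\bs_h,\bs'_h}$ finite. Some such global input relating $\bs'$ to the whole of $\bb^\bI$ (not just to $\bb$ letter by letter) is what your sketch lacks, and without it the proof does not close.
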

\begin{proof}
Comme tout \'el\'ement $\bI$-r\'eduit divise $\bb^\bI$ et que celui-ci n'a qu'une
\'ecriture dans $\BW$, on ne peut pas avoir $\bb\bs$ et $\bb\bt$ tous deux
$\bI$-r\'eduits, donc le cas $(1)$ de \ref{presentation de D_I}
ne se produit pas, d'o\`u (i).

On note $\bs_1$ le seul \'el\'ement de $\bS$ qui divise $\bb^\bI$ dans $\BW$.
Soit $\bs'\in\bI$; on a $\bs'\bb^\bI=\bb^\bI\br$ pour
un certain $\br\in\bS$ car $\bw_\bI$ (resp. $\bw_\bS$)
conjugue $\bI$ (resp. $\bS$) sur lui-m\^eme.
Donc $\bs'\bb^\bI$ est divisible \`a gauche par $\bs'$ et $\bs_1$, donc par
$\bw_{\{\bs',\bs_1\}}$. Donc
$\bb^\bI=\bw_{\bs_1,\bs'}^{(m_{\bs_1,\bs'}-1)}\bb_1$ et,
apr\`es simplification par $\bw_{\bs_1,\bs'}^{(m_{\bs_1,\bs'}-1)}$,
l'\'egalit\'e $\bs'\bb^\bI=\bb^\bI\br$ devient
$\bs'_2\bb_1=\bb_1\br\in\Bred$, o\`u $\bs'_2$ vaut $\bs'$ ou $\bs_1$.
Le m\^eme argument it\'er\'e montre finalement que $\bb^\bI$ est de la forme
$\bb^\bI=\prod_{i=1}^{i=k}\bw_{\bs_i,\bs'_i}^{(m_{\bs_i,\bs'_i}-1)}$ avec
$\bs'_1=\bs'$ et o\`u $\bs'_{i+1}$ vaut $\bs'_i$ ou $\bs_i$, pr\'ecis\'ement
$\bs'_i\bw_{\{\bs_i,\bs'_i\}}=\bw_{\{\bs_i,\bs'_i\}}\bs'_{i+1}$.
L'\'ecriture
$\bb^\bI=\prod_{i=1}^{i=k}\bw_{\bs_i,\bs'_i}^{(m_{\bs_i,\bs'_i}-1)}$
\'etant une \'ecriture dans $\Bred$ fournit
l'\'ecriture unique de $\bb^\bI$.

Soient alors
$\bb\in\Bred$ et $\bs\in\bS$ tels que $\bb\bs$ soit $\bI$-r\'eduit.
L'\'el\'ement $\bb\bs$ divise $\bb^\bI$ donc est \'egal \`a
$\left[\prod_{i=1}^{i=h-1}\bw_{\bs_i,\bs'_i}^{(m_{\bs_i,\bs'_i}-1)}\right]
\bw_{\bs_h,\bs'_h}^{(j)}$, pour un certain $h$ et un certain
$j<m_{s_h,s_{h'}}$, avec $\bs'=\bs'_1$ et $\bs$
\'egal au dernier terme de $\bw_{\bs_h,\bs'_h}^{(j)}$.
Ceci montre que $\bs'\bb=
\left[\prod_{i=1}^{i=h-1}\bw_{\bs_i,\bs'_i}^{(m_{\bs_i,\bs'_i}-1)}\right]
\bs'_h\bw_{\bs_h,\bs'_h}^{(j-1)}$, donc
$\bb\inv\bs'\bb=
(\bw_{\bs_h,\bs'_h}^{(j-1)})\inv\bs'_h\bw_{\bs_h,\bs'_h}^{(j-1)}$
est dans un groupe de la forme
$B_{\{\bs,\bt\}}$ avec $m_{\bs,\bt}$ fini
et on peut appliquer le lemme \ref{s'*a_b1s*s' inv}; donc par les relations de
type (2) dans \ref{presentation de D_I} le conjugu\'e de $a_{\bb,\bs}$ par
$\bs'$ est un produit d'\'el\'ements de la m\^eme forme. Le groupe engendr\'e
par les $a_{\bb,\bs}$ est donc normal, donc \'egal \`a $U_\bI$.
\end{proof}

Les cas o\`u la
proposition pr\'ec\'edente s'applique sont 
exactement les cas $A_n$ et $B_n$
avec $\bI=\{\bs_1,\bs_2\ldots,\bs_{n-1}\}$
(la double liaison \'etant entre $s_1$
et $s_2$ pour $B_n$) et $I_2(m)$ o\`u $\bI$ est l'un des
deux g\'en\'erateurs.

Nous donnons maintenant cas par cas,
quand $W$ est de type $A_n$, $B_n$, $D_n$ ou
$I_2(m)$, des pr\'esentations de $D_I$ permettant
par r\'ecurrence d'obtenir une pr\'esentation du groupe de tresses pur.

Commen\c cons par rappeler le cas $A_n$ qui est bien connu.
\begin{subsection}{Type $A_n$} Le diagramme de Coxeter est
$$\dbulnode{s_1}\bar\cdots\bar\dbulnode{s_{n-1}}\bar\dbulnode{s_n}.$$
On prend $\bI=\{\bs_1,\ldots,\bs_{n-1}\}$. Alors
$\bb^\bI=\bs_n\bs_{n-1}\ldots\bs_1$. Les $a_{\bb,\bs}$ sont donc les
$a_i=(\lexp{\bs_n\ldots\bs_{i+1}}\bs_i)^2$ pour $i=1,\ldots,n$.
Les relations dans $D_I$ sont donc les relations de tresses dans $\bI$
et
$$\begin{cases}\bs_ja_i\bs_j\inv=a_i&\text{ si } i\neq j,j+1\\
         \bs_ia_i\bs_i\inv=a_{i+1}&\\
         \bs_ia_ia_{i+1}\bs_i\inv=a_ia_{i+1}&\\
\end{cases}$$
\end{subsection}

\begin{subsection}{Type $B_n$}
Le diagramme de Coxeter est
$$\dbulnode{s_1}\dbar\dbulnode{s_2}\bar\cdots\bar\dbulnode{s_n}.$$
On prend $\bI=\{\bs_1,\ldots,\bs_{n-1}\}$. Alors
$\bb^\bI=\bs_n\bs_{n-1}\ldots\bs_1\bs_2\ldots\bs_n$.
Les $a_{\bb,\bs}$ sont donc les
$a_i=(\lexp{\bs_n\ldots\bs_{i+1}}\bs_i)^2$ pour $i=1,\ldots,n$ et les
$b_i=(\lexp{\bs_n\ldots\bs_1\bs_2\ldots\bs_{i-1}}\bs_i)^2$ pour $i=2,\ldots,n$.
Les relations dans $D_I$ sont donc les relations de tresses dans $\bI$
et
$$\begin{cases}\bs_ja_i\bs_j\inv=a_i&\text{ si }i\neq j,j+1\\
         \bs_ia_i\bs_i\inv=a_{i+1}& \text{ pour }i\neq 1\\
         \bs_ia_ia_{i+1}\bs_i\inv=a_ia_{i+1}&\text{ pour }i\neq 1\\
         \bs_jb_i\bs_j\inv=b_i&\text{ si }i\neq j,j+1\text{ et }i\geq2\\
         \bs_ib_{i+1}\bs_i\inv=b_i&\text{ pour }i\neq 1\\
         \bs_ib_{i+1}b_i\bs_i\inv=b_{i+1}b_i&\text{ pour }i\neq 1\\
         \bs_1b_2\bs_1\inv=a_2&\\
         \bs_1b_2a_1\bs_1\inv=a_1a_2&\\
         \bs_1b_2a_1a_2\bs_1\inv=b_2a_1a_2&\\
\end{cases}$$
\end{subsection}

\begin{subsection}{Type $I_2(m)$} On note $\bs$ et $\bt$
les deux g\'en\'erateurs et on prend $\bI=\{\bs\}$. On a $\bb^\bI=
\wst{m-1}$ et les $a_{\bb,\bs}$ sont les $a_i=(\lexp{\wst{i-1}}\br)^2$
o\`u $\br$ vaut $\bs$ ou $\bt$ selon la parit\'e de $i$ et $i=1,\ldots,m-1$.
La proposition \ref{presentation de D_I} donne des relations de type $(2)$
avec $\bb_0=1$ et $\bs'=\bs$. On obtient donc comme uniques relations
$$\bs a_{m-1}a_{m-2}\ldots a_{m-i}\bs\inv=a_i\ldots a_2a_1, \text{ pour
}i=1,\ldots,m-1.$$
\end{subsection}

\begin{subsection}{Type $D_n$}
Le diagramme de Coxeter est
$$\dbulnode{s_2}\bar
\vertbar{\dbulnode{s_3}}{\bulnode{\scriptstyle s_{2'}}}\bar
\dbulnode{s_4}\bar\cdots\bar\dbulnode{s_n}.$$
On suppose $n\geq 2$.
On note les g\'en\'erateurs $\bs_2,\bs_{2'},\bs_3,\ldots,\bs_n$ et
on prend
$\bI=\bS-\{\bs_n\}$ si $n\geq 3$ et $\bI=\emptyset$ si $n=2$.
On a $\bb^\bI=\bs_n\bs_{n-1}\ldots\bs_3\bs_2\bs_{2'}
\bs_3\ldots\bs_n$. Si $n\geq 3$
cet \'el\'ement a exactement deux \'ecritures dans $\BW$,
on passe de l'une \`a l'autre en \'echangeant $\bs_2$ et $\bs_{2'}$.
Les \'el\'ements $a_{\bb,\bs}$ sont donc les
$a_i=(\lexp{\bs_n\ldots\bs_{i+1}}\bs_i)^2$ pour $i=2,2',3,\ldots,n$, avec la
convention que $i+1$ vaut $3$ si $i=2'$,
les
$b_i=(\lexp{\bs_n\ldots\bs_3\bs_2\bs_{2'}\bs_3\ldots\bs_{i-1}}\bs_i)^2$ pour
$i=3,\ldots,n$, ainsi que
les deux \'el\'ements $a_{\bs_n\ldots\bs_3\bs_{2'},\bs_2}$ et
$a_{\bs_n\ldots\bs_3\bs_2,\bs_{2'}}$, qui sont clairement \'egaux respectivement \`a
$a_2$ et $a_{2'}$.

Appliquons la proposition \ref{presentation de D_I}. Il y a des relations de
type $(1)$, les seuls triplets $(\bb_0,\bs,\bt)$ possibles \'etant
$(\bs_n\ldots\bs_4\bs_3,\bs_2,\bs_{2'})$ et
$(\bs_n\ldots\bs_4\bs_3,\bs_{2'},\bs_2)$, ce qui donne les relations d'\'egalit\'e
remarqu\'ees ci-dessus et la relation $a_2a_{2'}=a_{2'}a_2$.

Les relations de type (2) obtenues sont les suivantes,
o\`u $i$ et $j$ parcourent $\{2,2',3,\ldots,n\}$ et o\`u l'on fait encore la
convention que $i+1$ vaut $3$ si $i=2'$:
$$\begin{cases}\bs_ja_i\bs_j\inv=a_i&\text{ si }i\neq j,j+1\text{ et }
                                      \{i,j\}\neq\{2,2'\}\\
         \bs_ia_i\bs_i\inv=a_{i+1}&\\
         \bs_ia_ia_{i+1}\bs_i\inv=a_ia_{i+1}&\\
         \bs_jb_i\bs_j\inv=b_i&\text{ si }i\neq j,j+1 \text{ et }i\geq3\\
         \bs_ib_{i+1}\bs_i\inv=b_i&\text{ pour }i\geq 3\\
         \bs_ib_{i+1}b_i\bs_i\inv=b_{i+1}b_i&\text{ pour }i\geq 3\\
         \bs_2b_3\bs_2\inv=a_{2'}&\\
         \bs_2b_3a_{2'}\bs_2\inv=b_3a_{2'}&\\
\end{cases} $$
ainsi que les relations obtenues  \`a partir des deux derni\`eres lignes
en \'echangeant $a_2$ et $a_{2'}$ et en m\^eme temps
$\bs_2$ et $\bs_{2'}$.

Le groupe $D_I$ a donc une pr\'esentation o\`u les g\'en\'erateurs sont $\bI$,
les $a_i$ et les $b_i$ avec comme relations les relations de tresse dans $\bI$
la relation $a_2a_{2'}=a_{2'}a_2$ et les relations ci-dessus.
Les hypoth\`eses de la
proposition \ref{U_I normal} ne sont pas vraies pour
le type $D_n$. N\'eanmoins la propri\'et\'e (ii) de la proposition est vraie, d'apr\`es
les relations ci-dessus: le groupe $U_\bI$ est engendr\'e par les $a_{\bb,\bs}$.
\end{subsection}

Donnons un renseignement de plus sur $U_\bI$:
\begin{proposition} \label{U_I libre}
Dans les cas $A_n$, $B_n$ et $I_2(m)$, avec $\bI$ comme ci-dessus,
le groupe $U_\bI$
est le groupe libre engendr\'e par les $a_{\bb,\bs}$ o\`u $\bb\bs$ est
$\bI$-r\'eduit.
\end{proposition}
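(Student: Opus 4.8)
The plan is to recognise, in each of the three cases, the presentation of $D_I$ furnished by Proposition~\ref{presentation de D_I} as the standard presentation of a semi-direct product $F\rtimes B_\bI$, where $F$ is the \emph{free} group on the generators $a_{\bb,\bs}$; granting this, the kernel $U_\bI$ of the projection $D_I\to B_\bI$ (Corollaire~\ref{Produit semi-direct}) is identified with $F$, which is exactly the assertion.

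First I would invoke Proposition~\ref{U_I normal}: for $A_n$, $B_n$ and $I_2(m)$ there are no relations of type~$(1)$, so the defining relations of $D_I$ are the braid relations among the elements of $\bI$ together with the relations of type~$(2)$. Since the relations of type~$(1)$ are precisely those involving only the $a_{\bb,\bs}$, their absence guarantees that no defining relation is a pure relation among the $a_{\bb,\bs}$. Next, fixing $(\bs,\bt)$ and $\bb_0$ and writing $a^{(j)}$ for $a_{\bb_0,\bs,\bt}^{(j)}$, the relations of type~$(2)$ for $i=1,\dots,m-1$ can be solved by induction on $i$ (a chain of Tietze transformations): for $i=1$ the relation is $\bs'a^{(m-2)}\bs^{\prime-1}=a^{(0)}$, and feeding this into the case $i=2$ isolates $\bs'a^{(m-3)}\bs^{\prime-1}=(a^{(0)})^{-1}a^{(1)}a^{(0)}$, and so on, each step expressing one more conjugate $\bs'a^{(m-1-i)}\bs^{\prime-1}$ as a word in the $a$'s. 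Hence the type~$(2)$ relations are equivalent to a family of single-generator conjugation relations $\bs'a_{\bb,\bs}\bs^{\prime-1}=w$, with $w$ a word in the $a_{\bb,\bs}$; these are exactly the relations tabulated in the three subsections above.

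At this stage the presentation of $D_I$ has become $\langle\,\{a_{\bb,\bs}\}\cup\bI\mid(\text{braid relations in }\bI),\ \bs'a_{\bb,\bs}\bs^{\prime-1}=w_{\bs',\bb,\bs}\,\rangle$. The remaining point, which is the heart of the matter, is to verify that these conjugation formulas define an action of $B_\bI$ on the free group $F$ on the $a_{\bb,\bs}$: namely that for each $\bs'\in\bI$ the assignment $a_{\bb,\bs}\mapsto w_{\bs',\bb,\bs}$ extends to an automorphism $\phi_{\bs'}$ of $F$, and that the $\phi_{\bs'}$ satisfy the braid relations of $B_\bI$. Once this is checked, $F\rtimes_\phi B_\bI$ is a genuine group whose standard presentation is word-for-word the presentation of $D_I$ just obtained (here it is essential that $F$ is free, so that no relation among the $a_{\bb,\bs}$ is added); therefore $D_I\simeq F\rtimes_\phi B_\bI$ and $U_\bI\simeq F$ is free. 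I stress that this step cannot be bypassed by merely using that conjugation in $D_I$ is an automorphism of $U_\bI$: that only gives automorphisms of the quotient $U_\bI$ of $F$, whereas freeness requires the formulas to define automorphisms of $F$ itself.

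It remains to carry out this verification case by case. For $A_n$ the action is $\phi_{\bs_i}\colon a_i\mapsto a_{i+1}$, $a_{i+1}\mapsto a_{i+1}^{-1}a_ia_{i+1}$, and $a_j\mapsto a_j$ for $j\neq i,i+1$, which is the classical Artin action of the braid group on a free group; it is automatically by automorphisms and satisfies the braid relations. For $I_2(m)$ the group $B_\bI=\langle\bs\rangle$ is infinite cyclic, so there are no braid relations to test and one only checks that $\phi_\bs$ is an automorphism; this follows because its images $\phi_\bs(a_{m-k})=(a_{k-1}\cdots a_1)^{-1}a_k(a_{k-1}\cdots a_1)$ form a free basis of $F$, as a short triangular computation recovers each $a_k$ as a word in them. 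The genuinely delicate case is $B_n$, where $B_\bI$ is the braid group of type $B_{n-1}$: here I would read off the single-generator formulas from the displayed relations, check that each $\phi_{\bs_i}$ is invertible on the free group on the $a_i$ and $b_i$, and verify the braid relations of $B_{n-1}$. The commutations and the length-$3$ relations are routine, and the length-$4$ relation between $\bs_1$ and $\bs_2$ is where I expect the real computational obstacle to lie.
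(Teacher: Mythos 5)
Your overall strategy coincides with the paper's: reduce the type $(2)$ relations by Tietze transformations to single-generator conjugation formulas $\bs'a_{\bb,\bs}\bs^{\prime-1}=w$, then show that these formulas define an action of $B_\bI$ on the free group $F$ on the $a_{\bb,\bs}$, so that $F\rtimes B_\bI$ has word-for-word the presentation of $D_I$ and hence $U_\bI\simeq F$ is free. You also correctly insist that this verification cannot be bypassed by abstract nonsense about conjugation in $D_I$. The problem is that you then do not carry the verification out in the one case where it is nontrivial: for $B_n$ you write that you \emph{would} read off the formulas, check invertibility, and verify the braid relations of $B_{n-1}$, and that the length-$4$ relation between $\bs_1$ and $\bs_2$ is ``where the real computational obstacle lies.'' Identifying the obstacle is not overcoming it; as it stands, the only hard case of the proposition has no proof.

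For comparison, that verification is where the paper spends essentially all of its effort, and it uses a device your plan lacks. The paper first changes generators, setting $x_i=b_nb_{n-1}\ldots b_2a_1a_2\ldots a_i$ and $y_i=b_nb_{n-1}\ldots b_{i+1}$; in these coordinates the conjugation formulas become uniform three-letter expressions ($\bs_ix_i\bs_i\inv=x_{i-1}x_i\inv x_{i+1}$, $\bs_iy_i\bs_i\inv=y_{i+1}y_i\inv y_{i-1}$, plus two mixed formulas for $\bs_1$), which are visibly automorphisms of the free group. The length-$3$ braid relations then all follow from one general lemma (\ref{auto du gpe libre}) on automorphisms of a free group on four generators, applied once to the $\eta$'s and once to the $\xi$'s, and the length-$4$ relation between $\bs_1$ and $\bs_2$ reduces to a short explicit computation of both composites on $\xi_1,\xi_2,\eta_1,\eta_2$. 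If instead one works directly with the generators $a_i,b_i$ as you propose (where, e.g., $\bs_1a_2\bs_1\inv=a_2\inv a_1\inv b_2a_1a_2$), the words grow quickly and there is no obvious structural lemma to invoke, so the computation you defer is genuinely the crux. To repair your proposal, either perform the paper's change of variables and the two resulting checks, or exhibit the full braid-relation verification on the original generators.
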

\begin{proof}
Dans le cas $A_n$ c'est un r\'esultat connu et dans le cas $I_2(m)$ c'est
imm\'ediat \`a partir de la pr\'esentation donn\'ee ci-dessus. \'Etudions le cas $B_n$.
La m\'ethode que nous employons peut d'ailleurs \^etre appliqu\'ee aussi \`a $A_n$.
On commence par changer d'ensemble de g\'en\'erateurs en posant
$x_i=b_nb_{n-1}\ldots b_2a_1a_2\ldots a_i$ pour $i=1,\ldots ,n$
et $y_i=b_nb_{n-1}\ldots b_{i+1}$ pour $i=1,\ldots,n-1$.
Le groupe $U_\bI$ est engendr\'e par les $x_i$ et les $y_i$ et $D_I$
a une pr\'esentation avec comme g\'en\'erateurs les $x_i$
(avec $i=1,\ldots,n$), les $y_i$ (avec $i=1,\ldots,n-1$) et les $\bs_i$ (avec
$i=1,\ldots,n-1$) et comme relations les relations de tresse entre les $\bs_i$
et les relations de type (1) qui s'\'ecrivent
$$\begin{cases}\bs_jx_i\bs_j\inv=x_i&\text{ si }i\neq j\\
         \bs_jy_i\bs_j\inv=y_i&\text{ si }i\neq j\\
         \bs_ix_i\bs_i\inv=x_{i-1}x_i\inv x_{i+1}&\text{ pour }i\neq 1\\
         \bs_iy_i\bs_i\inv=y_{i+1}y_i\inv y_{i-1}&\text{ pour }i\neq 1\\
         \bs_1x_1\bs_1\inv=y_2y_1\inv x_2&\\
         \bs_1y_1\bs_1\inv=y_2x_1\inv x_2&\\
\end{cases}
	 $$
Pour montrer que $U_\bI$ est le groupe libre engendr\'e par les $a_i$ et les
$b_i$ ou, ce qui revient au m\^eme, par les $x_i$ et les $y_i$
il suffit de montrer que les formules ci-dessus d\'efinissent bien une action
de $B_\bI$ sur un groupe libre. Plus pr\'ecis\'ement il suffit
de montrer que si $F$ est le groupe libre
engendr\'e par $\{\xi_1,\ldots,\xi_n,\eta_1,\ldots,\eta_{n-1}\}$,
les formules ci-dessus avec $x_i$ remplac\'e par $\xi_i$ et $y_i$ remplac\'e par
$\eta_i$ d\'efinissent une action de $B_\bI$ sur $F$.
Le groupe $F\rtimes B_\bI$ aura alors la m\^eme pr\'esentation que $D_\bI$ donc lui
sera isomorphe, l'isomorphisme envoyant $F$ sur $U_\bI$.

L'action d'un $\bs_j$ fix\'e donn\'ee par les
formules ci-dessus
est bien un automorphisme de $F$. Il suffit donc de voir que ces
automorphismes v\'erifient les relations de tresses.
Les relations de commutation
entre les actions
de $\bs_j$ et $\bs_k$ quand $|j-k|>1$ sont claires. Pour voir les
autres relations de tresses nous utilisons le lemme suivant qui r\'esulte d'un
simple calcul.
\begin{lemme}\label{auto du gpe libre}
Soient $\bs$ et $\bt$ deux automorphismes d'un groupe libre 
$G$ engendr\'e par quatre \'el\'ements $w$, $x$,
$y$ et $z$. On suppose que 
\begin{enumerate}
\item $\bs$ agit trivialement sur $x$, $w$ et $z$ et envoie $y$ sur
$xy\inv z$.
\item $\bt$ agit trivialement sur $y$, $w$ et $z$ et envoie $x$ sur
$wx\inv y$
\end{enumerate}
Alors $\bs\circ\bt\circ\bs=\bt\circ\bs\circ\bt$.
\end{lemme}
En appliquant une fois
le lemme avec $\bs=\bs_j$ et $\bt=\bs_{j+1}$, agissant
sur le
groupe libre engendr\'e par $w=\eta_{j+2}$, $x=\eta_{j+1}$, $y=\eta_j$ et
$z=\eta_{j-1}$
avec $j\geq 2$, et une deuxi\`eme fois avec les m\^emes \'el\'ements agissant sur
le groupe libre engendr\'e par
$w=\xi_{j-1}$, $x=\xi_j$, $y=\xi_{j+1}$ et $z=\xi_{j+2}$,
on voit que les actions
de $\bs_j$ et de $\bs_{j+1}$ v\'erifient la relation de tresses.
Il reste \`a voir la relation entre $\bs_1$ et $\bs_2$. On v\'erifie que
le compos\'e des automorphismes
$\bs_1\circ\bs_2\circ\bs_1\circ\bs_2$
comme le compos\'e
$\bs_2\circ\bs_1\circ\bs_2\circ\bs_1$
envoie
$\xi_1$ sur $\eta_3\eta_1\inv \xi_3$ (resp. $\xi_2$ sur 
$\eta_3\eta_2\inv \xi_3$, resp. $\eta_1$ sur $\eta_3\xi_1\inv \xi_3$,
resp. $\eta_2$ sur $\eta_3\xi_2\inv \xi_3$).
\end{proof}
\begin{remarque} Dans le cas $D_n$ avec $\bI$ comme ci-dessus, le groupe $U_\bI$
n'est \'evidemment pas libre \`a cause de la relation $a_2a_{2'}=a_{2'}a_2$,
mais ce n'est pas non plus le groupe engendr\'e par les $a_{\bb,\bs}$ avec cette
seule relation. Par exemple les \'el\'ements $a_{2'}\inv b_3 a_{2'}$ et $a_3$
commutent aussi (ce sont les images de $a_{2'}$ et $a_2$ par la conjugaison
par $\bs_2$).
\end{remarque}
Terminons cette section par une propri\'et\'e du d\'evissage \ref{devissage}
du groupe de tresses pur
dans les cas $A_n$, $B_n$ et $D_n$.
Si on prend $\bI$ comme ci-dessus, le groupe $B_\bI$ est un groupe de m\^eme
type, de rang $n-1$ (sauf si $i=2$ dans le cas $D_n$) et on peut d\'efinir une
suite de parties de $\bS$ embo{\^\i}t\'ees 
$\bI_0=\emptyset\subset\bI_1\ldots\subset\bI_{n-1}\subset\bI_n=\bS$ o\`u
$|\bI_i|=i$ sauf si $i=1$ pour le type $D_n$ auquel cas $\bI_1=\bI_0=\emptyset$.
Notons comme dans \ref{devissage} $U_j$ le groupe $U_\bI$ quand le
groupe ambient est $B_{\bI_j}$ et $\bI=\bI_{j-1}$.
\begin{proposition} \label{U_i-1 to U_i}
Avec les notations ci-dessus, pour $i\geq 2$,
la conjugaison par $\bs_i$ est un
isomorphisme de $U_{i-1}$ sur un sous-groupe de $U_i$ 
en tant que groupes munis d'une action de $B_{\bI_{i-2}}$.
\end{proposition}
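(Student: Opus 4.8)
Le plan est de d\'ecomposer l'\'enonc\'e en deux v\'erifications ind\'ependantes: que la conjugaison par $\bs_i$ envoie $U_{i-1}$ dans $U_i$, et qu'elle est \'equivariante pour les actions de $B_{\bI_{i-2}}$ fournies par \ref{Produit semi-direct}. L'injectivit\'e est automatique, la conjugaison par $\bs_i$ \'etant un automorphisme int\'erieur de $B_W$: sa restriction \`a $U_{i-1}$ est d\'ej\`a un isomorphisme sur son image, et il ne restera qu'\`a voir que cette image est contenue dans $U_i$. (Pour le type $D_n$ et $i=2$ on a $\bI_1=\emptyset$, donc $U_1=1$ et l'\'enonc\'e est vide.)

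Je traiterais d'abord l'\'equivariance, qui est purement formelle. Dans chacun des types $A_n$, $B_n$, $D_n$ munis de la cha\^ine $\bI_0\subsetneq\cdots\subsetneq\bI_n$ choisie, le n\oe ud $\bs_i$ n'est li\'e dans le diagramme de Coxeter qu'aux \'el\'ements de $\bI_{i-1}\setminus\bI_{i-2}$ (soit $\bs_{i-1}$, soit $\bs_2$ et $\bs_{2'}$ au niveau de la fourche de $D_n$); il commute donc \`a tous les g\'en\'erateurs de $\bI_{i-2}$, et par suite \`a tout $B_{\bI_{i-2}}$. Comme les deux actions en jeu sont des conjugaisons, l'\'egalit\'e $\bs_i(g\,u\,g\inv)\bs_i\inv=g(\bs_i\,u\,\bs_i\inv)g\inv$, pour $g\in B_{\bI_{i-2}}$ et $u\in U_{i-1}$, r\'esulte aussit\^ot de $\bs_ig=g\bs_i$.

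Le point de fond sera l'inclusion $\bs_iU_{i-1}\bs_i\inv\subset U_i$. D'apr\`es les calculs men\'es type par type (qui montrent que $U_\bI$ est \emph{engendr\'e}, et pas seulement normalement engendr\'e, par les $a_{\bb,\bs}$), $U_{i-1}$ est engendr\'e par les $a_{\bb,\bs}=\bb\bs^2\bb\inv$ o\`u $\bb\in\Bred$ ne fait intervenir que les g\'en\'erateurs de $\bI_{i-1}$, o\`u $\bs\in\bI_{i-1}$, et o\`u $\bb\bs$ est $\bI_{i-2}$-r\'eduit. La conjugaison par $\bs_i$ transforme un tel g\'en\'erateur en $(\bs_i\bb)\bs^2(\bs_i\bb)\inv=a_{\bs_i\bb,\bs}$; il suffira donc de v\'erifier que $a_{\bs_i\bb,\bs}$ est bien un g\'en\'erateur de $U_i$, \ie\ que $\bs_i\bb\bs\in\Bred$ et est $\bI_{i-1}$-r\'eduit. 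La premi\`ere condition est imm\'ediate: $\bb\bs$ a son image dans $W_{\bI_{i-1}}$ et $\bs_i\notin\bI_{i-1}$, donc $\bs_i\bb\bs\in\Bred$ (et $\bs_i$ divise \`a gauche cet \'el\'ement).

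La difficult\'e principale sera la $\bI_{i-1}$-r\'eduction, \ie\ montrer que $\bt\bs_i\bb\bs\in\Bred$ pour tout $\bt\in\bI_{i-1}$. Pour $\bt\in\bI_{i-2}$, $\bt$ commute \`a $\bs_i$ et $\bt\bb\bs\in\Bred$ (car $\bb\bs$ est $\bI_{i-2}$-r\'eduit), d'o\`u $\bt\bs_i\bb\bs=\bs_i\bt\bb\bs\in\Bred$. Pour $\bt\in\bI_{i-1}\setminus\bI_{i-2}$ on a $m_{s_i,t}\geq3$; si l'on avait $\bt\bs_i\bb\bs\notin\Bred$, alors $\bs_i\bb\bs$ serait divisible \`a gauche \`a la fois par $\bs_i$ et par $\bt$, donc par $\bw_{\{\bs_i,\bt\}}$ (analogue \`a gauche de \ref{b s'ajoute a s et t}, obtenu par retournement des mots), et $\bb\bs=\bs_i\inv(\bs_i\bb\bs)$ serait alors divisible \`a gauche par $\bs_i\inv\bw_{\{\bs_i,\bt\}}$, mot r\'eduit de longueur $m_{s_i,t}-1\geq2$ faisant intervenir $\bs_i$ --- ce qui est exclu car $\bb\bs$ vit dans $W_{\bI_{i-1}}$. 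Donc $\bs_i\bb\bs$ est $\bI_{i-1}$-r\'eduit, $a_{\bs_i\bb,\bs}\in U_i$, et joint \`a l'\'equivariance et \`a l'injectivit\'e ceci fournit l'isomorphisme cherch\'e sur un sous-groupe de $U_i$. C'est ce contr\^ole final de r\'eduction, assur\'e uniform\'ement par l'argument de double divisibilit\'e, qui constitue le c\oe ur de la preuve.
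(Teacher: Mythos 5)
Votre d\'emonstration est correcte, et son squelette co{\"\i}ncide avec celui du papier : (a) les conjugu\'es par $\bs_i$ des g\'en\'erateurs $a_{\bb,\bs}$ de $U_{i-1}$ sont des g\'en\'erateurs de $U_i$ (ce qui utilise, comme chez vous, le fait \'etabli en \ref{U_I normal} et dans la remarque sur $D_n$ que $U_\bI$ est engendr\'e, et pas seulement normalement engendr\'e, par les $a_{\bb,\bs}$), et (b) $\bs_i$ centralise $B_{\bI_{i-2}}$, d'o\`u l'\'equivariance. La diff\'erence r\'eelle porte sur la v\'erification de (a) : le papier se contente de lire les formules explicites calcul\'ees type par type --- par exemple en type $A_n$ ou $B_n$, conjuguer $(\lexp{\bs_{i-1}\ldots\bs_{j+1}}\bs_j)^2$ par $\bs_i$ donne $(\lexp{\bs_i\bs_{i-1}\ldots\bs_{j+1}}\bs_j)^2$, qui figure dans la liste des g\'en\'erateurs de $U_i$ --- tandis que vous remplacez cette inspection par un argument combinatoire uniforme : si $\bb\bs$ est $\bI_{i-2}$-r\'eduit alors $\bs_i\bb\bs$ est r\'eduit et $\bI_{i-1}$-r\'eduit, via la double divisibilit\'e \`a gauche par $\bs_i$ et $\bt$ (analogue \`a gauche de \ref{b s'ajoute a s et t}) et le fait que les \'ecritures r\'eduites d'un \'el\'ement de $W_{\bI_{i-1}}$ ne font intervenir que des g\'en\'erateurs de $\bI_{i-1}$. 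Votre argument est plus long mais plus robuste : il traite $A_n$, $B_n$ et $D_n$ (y compris la fourche) d'un seul coup et s'appliquerait \`a toute cha\^ine de parties v\'erifiant les deux hypoth\`eses ci-dessus, alors que la preuve du papier est imm\'ediate mais repose enti\`erement sur les listes explicites des sous-sections pr\'ec\'edentes; notez aussi que vous explicitez utilement le cas d\'eg\'en\'er\'e $i=2$ en type $D_n$ ($U_1=1$), que le papier passe sous silence. Seule retouche mineure \`a faire : $\bs_i$ est aussi li\'e dans le diagramme \`a $\bs_{i+1}$, donc \'ecrivez plut\^ot que $\bs_i$ n'est li\'e \`a aucun \'el\'ement de $\bI_{i-2}$ (ce qui est la seule chose utilis\'ee) que de dire qu'il n'est li\'e qu'aux \'el\'ements de $\bI_{i-1}\setminus\bI_{i-2}$.
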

\begin{proof} D'apr\`es les formules donnant les
g\'en\'erateurs de $U_{i-1}$, la conjugaison envoie bien $U_{i-1}$ dans $U_i$.
Comme $\bs_i$ centralise $B_{\bI_{i-2}}$, on obtient le r\'esultat.
\end{proof}
\section{Plongement du groupe de type $B_n$ dans le groupe de type $A_n$}
Dans cette section nous montrons que les actions des groupes de tresses de
type $A$ ou $B$ sur un groupe libre d\'efinie pr\'ec\'edemment sont fid\`eles et nous
en d\'eduisons un plongement du groupe de type $B_n$ dans le groupe de type
$A_n$.
\begin{proposition} \label{fidele} Les actions des groupes de tresses de
type $A$ ou $B$ sur des groupes libres d\'efinies plus haut sont
fid\`eles.
\end{proposition}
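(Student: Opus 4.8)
The plan is to prove faithfulness of the two actions by producing, for each type, a concrete faithful "target" on which the free group sits visibly, and then comparing. The cleanest route is to realize that the braid group of type $A_n$ (resp. $B_n$) acts on a free group by the classical Artin representation (resp. its signed analogue), and that the action constructed above is conjugate to it. Since the Artin action of $B_{A_n}$ on the free group is known to be faithful, faithfulness of our action follows once we identify the two actions. So first I would recall, for type $A_n$, that $B_{W_I}\simeq B_{A_{n-1}}$ by \ref{Produit semi-direct}(i), and that the formulas
$$\bs_j x_i\bs_j\inv=x_i\ (i\neq j),\quad \bs_i x_i\bs_i\inv=x_{i-1}x_i\inv x_{i+1},$$
obtained from the $B_n$ presentation by forgetting the $y_i$ (equivalently setting the $b_i$ to $1$), are exactly the Artin automorphisms of the free group on the $x_i$. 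That reduces the $A_n$ case to Artin's theorem.

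For the harder type $B_n$ case, the idea is that our action is \emph{built} on a free group $F$ on generators $\xi_1,\dots,\xi_n,\eta_1,\dots,\eta_{n-1}$, and we already verified in \ref{U_I libre} that it is a genuine action. To show it is faithful I would argue that an element $\beta\in B_\bI$ acting trivially on $F$ must be trivial. The strategy is to \emph{project}: the subgroup of $F$ generated by the $\xi_i$ is $\bs_j$-stable under every generator (the $\xi$-formulas involve only $\xi$'s and $\eta$'s, but the $\eta$'s can be eliminated by a quotient sending each $\eta_i\mapsto 1$), giving an induced action on the free group on the $\xi_i$ which is precisely the type-$A_{n-1}$ Artin action. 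If $\beta$ acts trivially on $F$ it acts trivially on this quotient, hence is trivial in $B_{A_{n-1}}\simeq B_\bI$ by the $A_n$ case. One must check that the quotient map $\eta_i\mapsto 1$ is compatible with the action, i.e. that the induced formulas on the $\xi_i$ close up; inspection of the six displayed relations shows the $\bs_i x_i\bs_i\inv=x_{i-1}x_i\inv x_{i+1}$ and $\bs_1x_1\bs_1\inv=y_2y_1\inv x_2$ rules reduce modulo $\eta$'s to the Artin rules on the $\xi_i$, so the projection is equivariant.

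The main obstacle will be the last relation: after killing the $\eta_i$, the rule $\bs_1 x_1\bs_1\inv = y_2y_1\inv x_2$ becomes $\bs_1\xi_1\bs_1\inv=\xi_2$, which is \emph{not} the standard Artin form $\xi_0\xi_1\inv\xi_2$; so the naive quotient does not land on the Artin action of $B_{A_{n-1}}$ but on a slightly different (still type-$A$) faithful action, and I would need either to conjugate it into Artin form or to invoke faithfulness of the $A$-action directly in the form just proved. Concretely I expect to combine the two projections---one killing the $\eta_i$ and recovering an $A$-type action on the $\xi_i$, the other killing the $\xi_i$ and recovering an $A$-type action on the $\eta_i$---so that an element acting trivially on $F$ acts trivially on both quotients; the two kernels in $B_{A_{n-1}}$ should intersect trivially, forcing $\beta=1$. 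Establishing that the intersection of kernels is trivial, which amounts to showing the combined projection $B_\bI\to \mathrm{Aut}(\langle\xi\rangle)\times\mathrm{Aut}(\langle\eta\rangle)$ is injective, is where the real work lies, and I would handle it by reducing to the already-established faithfulness of a single type-$A$ action after checking that the two $A$-actions are not independent but genuinely detect all of $B_\bI$.
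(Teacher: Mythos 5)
Your proposal has a fatal gap precisely at its central device: the two ``projections'' of $F$ obtained by killing the $\eta_i$ (resp.\ the $\xi_i$) do not exist. For the action of $B_\bI$ to descend to $F/N$, where $N$ is the normal closure of $\{\eta_1,\dots,\eta_{n-1}\}$, every generator must map $N$ into $N$. You checked this only on the $\xi$-rules; it fails on the $\eta$-rules: the sixth formula reads $\bs_1y_1\bs_1\inv=y_2x_1\inv x_2$, so the automorphism attached to $\bs_1$ sends $\eta_1\in N$ to $\eta_2\xi_1\inv\xi_2$, whose image $\xi_1\inv\xi_2$ in the $\xi$-quotient is nontrivial. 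Hence $N$ is not stable under the action and no induced action on the free group on the $\xi_i$ exists. Symmetrically, the rule $\bs_1x_1\bs_1\inv=y_2y_1\inv x_2$ shows that the normal closure of the $\xi_i$ is not stable either, so the other projection does not exist. (The same phenomenon undermines your parenthetical derivation of the type-$A$ action by ``setting the $b_i$ to $1$'': the relation $\bs_1b_2\bs_1\inv=a_2$ from the $B_n$ presentation is incompatible with killing the $b_i$.) Moreover, even granting both projections, the decisive step---injectivity of the combined map into the two automorphism groups---is exactly what you leave unproved, so the argument would be incomplete in any case.

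The paper's proof takes an entirely different, self-contained route that you could adapt instead. First, the action on the abelianization of $F$ is the permutation action of the Coxeter group of type $B_{n-1}$ on the images of $a_2,\dots,a_n,b_2,\dots,b_n$, so any element acting trivially on $F$ lies in the pure braid group $P_{n-1}$. Next, by the d\'evissage $P_n=U_n\rtimes P_{n-1}$ of \ref{devissage}, the kernel consists of elements of $P_{n-1}$ centralizing $U_n$. Given such a $\bb\neq1$, one uses \ref{U_i-1 to U_i} to reduce to the case where, in the unique decomposition $\bb=\bb_{n-1}\bb_{n-2}\cdots\bb_0$ with $\bb_j\in U_j$, one has $\bb_{n-1}\neq1$; since every element of $B_{\bI_j}$ for $j\le n-2$ commutes with $a_n=\bs_n^2$, the element $\bb$ commutes with $a_n$ if and only if $\bb_{n-1}$ does. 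Conjugating by $\bs_n$ places $\bb_{n-1}$ inside the free group $U_n$, where the centralizer of the basis element $a_n$ is the cyclic group $\langle a_n\rangle$; this forces $\bb_{n-1}$ to be a power of $a_n$, contradicting $\bb_{n-1}\notin U_n$. The essential ingredients are thus the abelianization (a quotient the action really does preserve), the semidirect d\'evissage, and an elementary free-group centralizer fact---not a comparison with the Artin representation; any repair of your argument would have to replace the non-existent equivariant quotients by invariants of this kind.
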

\begin{proof}
Ce r\'esultat est connu pour le type $A$. Nous le d\'emontrons pour le type $B$.
La d\'emonstration pour le type $A$ est analogue.
Remarquons d'abord que l'action du groupe de tresses de type $B_{n-1}$ sur le
groupe libre \`a $2n-1$ g\'en\'erateurs que nous avons d\'efinie induit sur
l'ab\'elianis\'e du groupe libre l'action de permutation du groupe de Coxeter de
type $B_{n-1}$ vu comme groupe de permutations de $2n-2$ \'el\'ements qui sont
ici les images
dans l'ab\'elianis\'e de $a_2,\ldots,a_n$ et de $b_2,\ldots,b_n$.
Un \'el\'ement du groupe de tresses
qui agit trivialement sur le groupe libre
est donc dans le groupe de tresses pur
$P_{n-1}$ de type $B_{n-1}$.
Nous avons d\'ecompos\'e le groupe de tresses pur $P_n$
en produit semi-direct $U_n\rtimes P_{n-1}$, o\`u $U_n$ est le groupe libre
engendr\'e par $a_1,\ldots,a_n$ et $b_2,\ldots,b_n$.
Chercher le noyau de l'action
revient donc \`a chercher les \'el\'ements de $P_{n-1}$ qui commutent \`a $U_n$ dans
$P_n$.

Consid\'erons un \'el\'ement $\bb\neq 1$ de $P_{n-1}$ qui agit trivialement sur
$U_n$ et soit $i$ le plus petit indice tel que $\bb$ soit dans
$P_i$. D'apr\`es \ref{U_i-1 to U_i} en utilisant la conjugaison par
$\bs_n\bs_{n-1}\ldots\bs_{i+2}$, on voit que $\bb$ commute \`a $U_{i+1}$,
autrement dit on est ramen\'e \`a $i=n-1$.

L'\'el\'ement $\bb$ s'\'ecrit de fa\c con unique $\bb_{n-1}\bb_{n-2}\ldots\bb_0$
avec $\bb_j\in U_j$ pour tout $j$ et $\bb_{n-1}\neq 1$.
Comme $\bb$ commute \`a $U_n$, il commute en particulier \`a $a_n$
Comme tout \'el\'ement de $B_{\bI_j}$ commute \`a $a_n=s_n^2$
pour $j=1,\ldots,n-2$ on voit que $\bb$ comute \`a
$a_n$ si et seulement si $\bb_{n-1}$ commute \`a $a_n$.
Conjuguons par $\bs_n$, on obtient un \'el\'ement $\lexp{\bs_n}\bb_{n-1}\in U_n$
qui commute \`a $a_n$. Or $U_n$ est un groupe libre dont $a_n$ est un des
g\'en\'erateurs. Donc $\lexp{\bs_n}\bb_{n-1}$ est une puissance de $a_n=\bs_n^2$.
Ceci implique que $\bb_{n-1}$ est une puissance de $a_n$,
ce qui est impossible puisque $\bb_{n-1}\not\in U_n$.
\end{proof}
Consid\'erons un groupe de tresses de type $A_n$ de g\'en\'erateurs
$\bs_1,\ldots,\bs_n$ et un groupe de tresses de type $B_n$ de g\'en\'erateurs
$\bs'_1,\ldots,\bs'_n$. On a $\bs_1^2\bs_2\bs_1^2\bs_2=\bs_2\bs_1^2\bs_2\bs_1^2$,
donc il existe un homomorphisme $\varphi$ du groupe de tresses
de type $B_n$ dans le groupe de tresses de type $A_n$ qui envoie 
$\bs'_1$ sur $\bs_1^2$ et $\bs_i'$ sur $\bs_i$ pour $i\geq 2$.
Notre but est de d\'emontrer le r\'esultat suivant.
\begin{theoreme}\label{plogement B_n->A_n}
L'homomorphisme $\varphi$ est injectif.
\end{theoreme}
Le th\'eor\`eme r\'esulte de la proposition suivante.
\begin{proposition} \label{plongement des groupes libres}
Consid\'erons un groupe libre $F$ \`a $n+1$ g\'en\'erateurs $a_1,\ldots,a_{n+1}$,
muni de
l'action du groupe de tresses de type $A_n$ d\'efinie plus haut et un groupe
libre $F'$ \`a $2n+1$ g\'en\'erateurs $a'_1,\ldots a'_{n+1}, b_2\ldots b_{n+1}$
muni de l'action du
groupe de tresses de type $B_n$; alors
\begin{enumerate}
\item L'homomorphisme $\psi$ de $F'$ dans $F$ d\'efini par
$$\begin{cases}\psi(a'_1)=a_1^2,&\\
\psi(a'_i) =a_i,&pour $i=2\ldots n+1$\\
\psi(b_i)=a_1a_2\ldots a_ia_{i-1}\inv\ldots a_2\inv a_1\inv&
\text{ pour }i=2\ldots n+1\\
\end{cases}$$
est injectif.
\item L'image $\psi(F')$ est un sous-groupe normal d'indice 2 de $F$.
\item L'action du groupe de tresses de type
$B_n$ sur $F$ \`a travers $\varphi$ stabilise $\psi(F')$ et $\psi$ est un
morphisme de groupes munis de l'action du groupe de tresses de type $B_n$.
\end{enumerate}
\end{proposition}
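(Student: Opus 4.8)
Le plan est de tout ramener au noyau de l'homomorphisme d'augmentation $\epsilon\colon F\to\bbZ/2\bbZ$ donn\'e par $\epsilon(a_1)=1$ et $\epsilon(a_i)=0$ pour $i\geq2$. Pour (i) et (ii) j'appliquerais la m\'ethode de Reidemeister-Schreier \`a $\ker\epsilon$ avec la transversale $\{1,a_1\}$: les g\'en\'erateurs de Schreier non triviaux sont $a_1^2$, les $a_i$ et les $c_i:=a_1a_ia_1\inv$ (pour $i\geq2$), qui forment une base libre de $\ker\epsilon$, de cardinal $2n+1$. Les images par $\psi$ sont toutes dans $\ker\epsilon$, et un calcul t\'elescopique donne $\psi(b_i)=(c_2\cdots c_{i-1})\,c_i\,(c_2\cdots c_{i-1})\inv$. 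Ce changement de g\'en\'erateurs \'etant triangulaire (on reconstruit $c_i$ \`a partir de $\psi(b_2),\ldots,\psi(b_i)$ par r\'ecurrence sur $i$), la famille $\{a_1^2,a_2,\ldots,a_{n+1},\psi(b_2),\ldots,\psi(b_{n+1})\}$ est encore une base libre de $\ker\epsilon$. Ainsi $\psi$ envoie la base libre de $F'$ sur une base libre de $\ker\epsilon$: c'est un isomorphisme de $F'$ sur $\ker\epsilon$, d'o\`u (i), et $\psi(F')=\ker\epsilon$ est normal d'indice $2$ dans $F$, d'o\`u (ii).

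Pour la stabilit\'e dans (iii), je calculerais l'action de $\varphi(\bs'_1)=\bs_1^2$, \`a savoir $\bs_1^2\cdot a_1=a_2\inv a_1a_2$ et $\bs_1^2\cdot a_2=a_2\inv a_1\inv a_2a_1a_2$ (les autres $a_i$ \'etant fix\'es), celle de $\varphi(\bs'_i)=\bs_i$ pour $i\geq2$ \'etant de type $A$. Sur chaque g\'en\'erateur on constate alors que $\epsilon\circ\varphi(\bs'_i)=\epsilon$; comme les $\bs'_i$ engendrent, l'action du groupe de tresses de type $B_n$ via $\varphi$ commute \`a $\epsilon$, et stabilise donc $\ker\epsilon=\psi(F')$.

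Il reste l'\'equivariance $\psi(\lexp{\bs'_i}{x})=\lexp{\varphi(\bs'_i)}{\psi(x)}$, qu'il suffit de v\'erifier lorsque $x$ parcourt les g\'en\'erateurs libres de $F'$. Pour $i\geq2$ les deux actions sont de type $A$ et le calcul est routinier; le seul point un peu d\'elicat est que $\bs_i$ fixe $\psi(b_k)$ pour $k\geq i+2$, ce qui r\'esulte de la relation $\bs_i\cdot(a_ia_{i+1})=a_ia_{i+1}$ appliqu\'ee au facteur $a_ia_{i+1}$ et \`a son inverse qui figurent dans $\psi(b_k)$. Le c\oe ur de la d\'emonstration est le cas $i=1$, o\`u l'on doit \'etablir \`a la main les trois identit\'es $\bs_1^2\cdot(a_1a_2a_1\inv)=a_2$, $\bs_1^2\cdot a_1^2=a_2\inv a_1^2a_2$ et $\bs_1^2\cdot a_2=a_2\inv a_1\inv a_2a_1a_2$, qui traduisent respectivement $\psi(\lexp{\bs'_1}{b_2})=\lexp{\bs_1^2}{\psi(b_2)}$, $\psi(\lexp{\bs'_1}{a'_1})=\lexp{\bs_1^2}{\psi(a'_1)}$ et $\psi(\lexp{\bs'_1}{a'_2})=\lexp{\bs_1^2}{\psi(a'_2)}$, les g\'en\'erateurs d'indice $\geq3$ restant fix\'es des deux c\^ot\'es. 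C'est pr\'ecis\'ement l\`a l'obstacle principal: l'action de $\bs_1^2$ n'\'etant pas lin\'eaire, ces simplifications doivent \^etre men\'ees explicitement dans le groupe libre $F$; une fois ces trois \'egalit\'es acquises, l'assertion (iii) en d\'ecoule.
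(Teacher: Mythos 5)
Your proof is correct and follows essentially the same route as the paper's: identify $\psi(F')$ as an explicit index-2 subgroup of $F$ (your $\ker\epsilon$ is exactly the paper's subgroup of even-length words), compute a free basis of it by Reidemeister--Schreier with a two-element transversal, and conclude via an invertible (triangular) change of generators that $\psi$ carries the free basis of $F'$ to a free basis, which gives (i) and (ii) at once. The only differences are cosmetic: the paper first changes coordinates to $x_i=a_1\cdots a_i$, so that the images $\psi(a'_1)=x_1^2$, $\psi(a'_i)=x_{i-1}^{-1}x_i$, $\psi(b_i)=x_ix_{i-1}^{-1}$ make your telescoping conjugation formula unnecessary, and it dismisses (iii) as ``r\'esulte du calcul'', whereas you spell out the stability argument via $\epsilon$ and the three key identities for $\bs_1^2$, all of which check out.
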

Le th\'eor\`eme se d\'eduit imm\'ediatement de la proposition car
par (iii) et (i) un \'el\'ement de $\ker\varphi$ agit trivialement 
sur $F'$ et l'action sur $F'$ est fid\`ele par \ref{fidele}.
\begin{proof}[D\'emonstration de \ref{plongement des groupes libres}]
Le (iii) r\'esulte du calcul. Pour prouver (i) et (ii) nous changeons de
g\'en\'erateurs dans $F$. On pose $x_i=a_1\ldots a_i$. 
Le groupe $F$ est aussi le groupe libre engendr\'e par
$x_1,\ldots,x_{n+1}$. On a
$$\begin{cases}\psi(a'_1)=x_1^2,&\\
\psi(a'_i) = x_{i-1}\inv x_i,&\text{ pour }i=2\ldots n+1\\
\psi(b_i)= x_i x_{i-1}\inv& pour $i=2\ldots n+1$.\\
\end{cases}$$
Donc $\psi(F')$ contient tous les produits $x_ix_j$, $x_i\inv x_j$ et $x_i
x_j\inv$ avec $i,j\in\{1,\ldots,n+1\}$. C'est donc le sous-groupe de $F$ form\'e
des \'el\'ements de longueur paire en les $x_i$ et leurs inverses.
Ce sous-groupe est d'indice 2 dans $F$.
Pour montrer (i) nous appliquons la m\'ethode de Reidemeister-Schreier pour
obtenir une pr\'esentation de $\psi(F')$. Un ensemble de repr\'esentants des
classes est form\'e de $\{1,x_1\}$. On obtient
le syst\`eme de g\'en\'erateurs fourni par la
m\'ethode en multipliant $x_i$ et $x_1x_i$ par les inverses de leurs
repr\'esentants respectifs qui sont $x_1$ et $1$. On obtient donc $x_ix_1\inv$
pour $i=2,\ldots ,n+1$ et $x_1x_i$ pour $i=1,\ldots,n+1$.
La m\'ethode dit qu'il n'y a aucune relation entre ces g\'en\'erateurs. Donc
$\psi(F')$ est le groupe libre engendr\'e par ces \'el\'ements, donc aussi le groupe
libre engendr\'e par
$x_1^2$, les
$x_{i-1}\inv x_i$ et les
$x_i x_{i-1}\inv$ pour $i=2\ldots n+1$, car le passage d'un syst\`eme de
g\'en\'erateurs \`a l'autre est inversible.

Comme $\psi(F')$ est libre engendr\'e par les images (distinctes)
des g\'en\'erateurs de $F'$
l'homomorphisme $\psi$ est injectif.
\end{proof}
\begin{remarque} On peut aussi d\'emontrer ce r\'esultat topologiquement en
interpr\'etant le groupe de tresses de type $B_n$ comme une partie des tresses
classiques \`a $n+1$ brins. Nous avons voulu en donner une preuve purement
combinatoire.
\end{remarque}
\section{Une extension du groupe de Coxeter}
Consid\'erons le groupe quotient $B_W/D(P_W)$. D'apr\`es \ref{noyau de N}
on a une suite exacte
$$1\to P_W/D(P_W)\simeq (2\bbZ)^T\to B_W/D(P_W)\to W\to 1.$$
Le groupe $W$ agit naturellement sur $(2\bbZ)^T$ et se rel\`eve canoniquement
en $\Bred$ dans $B_W$ donc aussi dans $B_W/D(P_W)$.
L'extension donn\'ee par la suite
exacte ci-dessus est donc associ\'ee \`a un cocycle
que nous allons calculer. On sait que l'isomorphisme $P_W/D(P_W)\simeq
(2\bbZ)^T$ est compatible \`a l'action de $B_W$ sur $(2\bbZ)^T$ (via $W$).
Soient $\bv$ et $\bw$ les rel\`evements dans $\Bred$ de deux \'el\'ements
$v$ et $w$ de $W$; nous les consid\'erons comme des \'el\'ements de
$B_W/D(P_W)$;
si $a$ et $b$ sont dans $(2\bbZ)^T\subset B_W/D(P_W)$,
dans $B_W/D(P_W)$ on a
$a\bv b\bw=a\lexp v b\bv\bw=a\lexp v
b(\bv\bw\underline{vw}\inv)\underline{vw}$ o\`u $\underline{vw}$ est le relev\'e
dans $\Bred$ de $vw$.
L'\'el\'ement $(\bv\bw\underline{vw}\inv)$ est dans $P_W/D(P_W)$
et, vu comme \'el\'ement de $(2\bbZ)^T$, est \'egal \`a
$N(\bv\bw\underline{vw}\inv)$. De plus on a
$N(\bv\bw\underline{vw}\inv)=N(\bv\bw)-N(\underline{vw})$. Donc le cocycle
qui d\'efinit l'extension est $(v,w)\mapsto N(\bv\bw)-N(\underline{vw})=
N(\bv)+\lexp vN(\bw)-N(\underline{vw})$.
\begin{proposition} \label{extension de W}L'extension $B_W/D(P_W)$ de $W$
par $(2\bbZ)^T$ n'est pas un produit semi-direct.
\end{proposition}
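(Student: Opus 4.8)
The plan is to show that the extension does not split, \ie\ that there is no group homomorphism $W\to B_W/D(P_W)$ splitting the projection onto $W$. Such a section would send each simple reflection $s\in S$ to an element of order dividing $2$ lying above $s$; since that element projects to $s\neq1$, it would have order exactly $2$. Thus it suffices to prove the sharper statement that \emph{no} lift of $s$ in $B_W/D(P_W)$ squares to the identity.

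The first step is to record the one numerical fact that drives everything. Since $N(\bs)=s$ we have $N(\bs^2)=2s$, and as $\bs^2\in P_W$ the image of $\bs^2$ in $B_W/D(P_W)$ is exactly the element $2s$ of $(2\bbZ)^T$ under the identification $P_W/D(P_W)\simeq(2\bbZ)^T$ coming from \ref{noyau de N}. Now every lift of $s$ is of the form $a\,\bs$ with $a\in(2\bbZ)^T$. Using that the identification is $W$-equivariant for the conjugation action through $p$ (this is precisely \ref{restriction a P_W}, so that $\bs\,a\,\bs^{-1}=\lexp s a$ inside the quotient), I would compute
$$(a\,\bs)^2=a\,(\bs\,a\,\bs^{-1})\,\bs^2=a+\lexp s a+2s,$$
the sum being taken additively in $(2\bbZ)^T$.

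The decisive point, which I expect to be the only real content, is a parity obstruction on a single coordinate. Write the coefficient of the basis reflection $s\in T$ in $a$ as $2m$ with $m\in\bbZ$. Conjugation by $s$ fixes $s$ (as $sss=s$) and is a bijection of $T$, so $s$ is its own unique preimage and the coefficient of $s$ in $\lexp s a$ is again $2m$; hence the coefficient of $s$ in $(a\,\bs)^2$ equals $4m+2$, which is never zero, being $\equiv2\pmod4$. Therefore $(a\,\bs)^2\neq1$ for every $a$, no lift of $s$ has order dividing $2$, and the extension of $W$ by $(2\bbZ)^T$ cannot be a semidirect product. The only step requiring care is the justification that $\bs\,a\,\bs^{-1}=\lexp s a$ in $B_W/D(P_W)$, which is exactly the equivariance in \ref{restriction a P_W}; the remainder is the mod-$4$ bookkeeping above.
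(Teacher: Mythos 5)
Your proof is correct, and at bottom it is the same parity obstruction as the paper's, though packaged in a genuinely different way. The paper works cohomologically: it has already computed the $2$-cocycle $(v,w)\mapsto N(\bv)+\lexp vN(\bw)-N(\underline{vw})$, assumes a trivializing cochain $\alpha$, forms the crossed homomorphism $f(w)=N(\bw)-\alpha(w)$, and derives $\lexp sf(s)=-f(s)$ for an involution $s$, so the coefficient of $s$ in $f(s)$ must vanish --- impossible since that coefficient is $1-\alpha(s)_s$, which is odd. You instead argue at the level of group elements: a splitting would give an order-$2$ lift $a\bs$ of $s$, and $(a\bs)^2=a+\lexp sa+2s$ has $s$-coefficient $\equiv 2\pmod 4$, hence is never trivial. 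These are literally the same equation in disguise: the paper's condition $f(s)+\lexp sf(s)=0$ with $f(s)=s-\alpha(s)$ is exactly your $a+\lexp sa+2s=0$ with $a=-\alpha(s)$. What your route buys is self-containedness: you bypass the cocycle formalism and the explicit cocycle formula derived just before the proposition, needing only that $\bs^2$ maps to $2s$ under the identification $P_W/D(P_W)\simeq(2\bbZ)^T$ of \ref{noyau de N} and the $W$-equivariance of \ref{restriction a P_W}, both of which you invoke correctly; the paper's formulation, in exchange, records the stronger cohomological statement that the class of the extension is nonzero, fitting the framework it sets up in that section.
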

\begin{proof}
Il faut montrer que le cocycle n'est pas \'equivalent au cocycle trivial. 
Ceci \'equivaut \`a montrer qu'il n'existe pas d'application $\alpha:W\to
(2\bbZ)^T$ telle que $N(\bv)+\lexp vN(\bw)-N(\underline{vw})=
\alpha(v)+\lexp v\alpha(w)-\alpha(vw)$.
Si $\alpha$ existe, posons $f(w)=N(\bw)-\alpha(w)$.
On a $f(vw)=f(v)+\lexp vf(w)$ pour
tous $v$ et $w$ dans $W$. Ceci implique $f(1)=0$ et $\lexp vf(v)=-f(v)$
pour tout $v$ d'ordre 2. En particulier pour $v=s\in S$. Donc le coefficient
de $f(s)$ sur $s$ est nul, ce qui implique que le coefficient de $\alpha(s)$ sur
$s$ est \'egal \`a celui de $N(\bs)$, c'est-\`a-dire $1$.
Ceci est contradictoire avec
le fait que tous les coefficients de $\alpha$ sont pairs.
\end{proof}

\end{document}